\newtheorem{lemma}{Lemma}
\newtheorem{definition}{Definition}
\newtheorem{theorem}{Theorem}
\newtheorem{proposition}{Proposition}
\title{Stochastic Homogenization of Parabolic Equations with Lower-order Terms}
\author{Man Yang}
\address{Kyushu University, Japan.}
\email{yang.man.365@s.kyushu-u.ac.jp}
\begin{document}

\begin{abstract}
The study of homogenization results has long been a central focus in the field of mathematical analysis, particularly for equations without lower-order terms. However, the importance of studying homogenization results for parabolic equations with lower-order terms cannot be understated. In this study, we aim to extend the analysis to homogenization for the general parabolic equation with random coefficients:
	\begin{equation*}
		\partial_{t}p^\epsilon-\nabla\cdot\left(\mathbf{a}\left( \dfrac{x}{\epsilon},\dfrac{t}{\epsilon^2}\right)\nabla p^\epsilon\right)-\mathbf{b}\left( \dfrac{x}{\epsilon},\dfrac{t}{\epsilon^2}\right)\nabla p^\epsilon -\mathbf{d}\left( \dfrac{x}{\epsilon},\dfrac{t}{\epsilon^2}\right) p^\epsilon=0.
	\end{equation*}
  Moreover, we establish the Caccioppoli inequality and Meyers estimate for the generalized parabolic equation. By using the generalized Meyers estimate, we get the weak convergence of $p^\epsilon$ in $H^1$.
\end{abstract}

\keywords{Stochastic homogenization, Parabolic equations, Lower-order terms}

\maketitle







\section{Introduction}

Homogenization theory studies the effects of high-frequency oscillations in the coefficients on the solutions of partial differential equations. 
The classical theory of homogenization for elliptic systems with periodic coefficients was established by the French, Italian, and Russian schools (e.g. see \cite{bensoussan2011asymptotic}).
Most of the previous works have dealt with periodic coefficients (e.g. see \cite{jikov2012homogenization}) because the qualitative homogenization theory can be made quantitative in the periodic setting. The qualitative homogenization of elliptic equations with random coefficients was first established by Kozlov  \cite{kozlov1979averaging} and the quantitative results were obtained by Yurinskii \cite{yurinskii1986averaging}. Dal Maso and Modica \cite{modica1986nonlinear} used average integration for the convenience of applying the subadditive ergodic theorem \cite{akcoglu1981ergodic} to achieve convergence. Recently, 
Armstrong, Bordas and Mourrat  \cite{armstrong2018quantitative} analyzed parabolic equations with random coefficients by using subadditive quantities derived from a variational interpretation of parabolic equations.  

  For general elliptic equations, qualitative homogenization results and quantitative results under a periodic setting have been obtained by  Xu \cite{XuQiang}. Nevertheless, existing works mainly focus on stochastic homogenization results only for equations without lower-order terms, leaving a significant knowledge gap in understanding the effects of lower-order terms on the homogenization process. 
Given these considerations, it becomes clear that a novel approach is required to address the homogenization of parabolic equations with lower-order terms. Therefore, we aim to address the problem:
	\begin{equation}\label{0.01}
		\partial_{t}p(x,t)=\nabla\cdot\left(\mathbf{a}(x,t)\nabla p(x,t)\right)+\mathbf{b}(x,t)\nabla p(x,t) +\mathbf{d}(x,t) p(x,t),
	\end{equation}
where $ \mathbf{a}(x,t) $, $ \mathbf{b}(x,t) $ and $ \mathbf{d}(x,t) $ are random coefficients  and  $p(x,t) $ is the solution of (\ref{0.01}). We are interested in the behavior of the solution on large scales. To emphasize the heterogeneity of the problem, the equation is rescaled with the parameter $0 < \epsilon\ll1$, {as follows}:
	\begin{equation}\label{0.2}
		\partial_{t}p^\epsilon(x,t)=\nabla\cdot\left(\mathbf{a}\left( \dfrac{x}{\epsilon},\dfrac{t}{\epsilon^2}\right)\nabla p^\epsilon(x,t)\right)+\mathbf{b}\left( \dfrac{x}{\epsilon},\dfrac{t}{\epsilon^2}\right)\nabla p^\epsilon(x,t) + \mathbf{d}\left( \dfrac{x}{\epsilon},\dfrac{t}{\epsilon^2}\right) p^\epsilon(x,t).
	\end{equation}

The aim of this paper is to study the asymptotic behavior of the solution $p^\epsilon$ of equation (\ref{0.2}) when the parameter $\epsilon$ goes to $0$. We find that the weak solution $p^\epsilon$ converges weakly in $H^1$ to the solution $p_0$ of the constant-coefficient equation:
\begin{equation*}
\partial_{t}p_{0}(x,t)=\nabla\cdot\left(\mathbf{\bar{a}}\nabla p_{0}(x,t)\right)+\mathbf{\bar{b}}\nabla p_{0}(x,t)+\mathbf{\bar{d}} p_{0}(x,t),
\end{equation*}
where $\mathbf{\bar{a}}$, $\mathbf{\bar{b}}$ and $\mathbf{\bar{d}}$, called homogenized coefficients, are deterministic. To achieve convergence, we employ a small trick by considering $\hat{p}^\epsilon=\exp{(-\Lambda t)}p^\epsilon$ instead of $p^\epsilon$.  Here, $\Lambda$ represents a positive constant governing the lower-order terms $\mathbf{b}$ and $\mathbf{d}$, with a detailed definition in Definition \ref{a b c}. It is evident that the convergence of $p^\epsilon$ can be directly derived from that of $\hat{p}^\epsilon$. Importantly, this approach applies exclusively to parabolic cases. When homogenizing elliptic equations,  results for equations with lower-order terms structured in this manner cannot be obtained. Additional conditions on the lower-order terms are necessary for the homogenization of elliptic equations.

This paper is organized as follows. In Section 2, we provide notation, definitions, and assumptions and present the main theorem. In Section 3, we mainly present generalized  {Meyers estimate}. In Section 4, we give the proof of the main theorem. 


\section{Preliminaries}

In this section, we will introduce some notation and definitions and the main result.
\subsection{Notation}
Throughout this work, we let the same letter $C$ denote positive constants that may vary from line to line.

A parabolic cylinder is any set of the form $ U\times I$ where $ U\in \mathbb{R}^d $ is a bounded Lipschitz domain and $ I=(I_{-},I_{+})\in \mathbb{R} $ is a bounded open interval.  Denote the parabolic boundary of $U\times I$ by $ \partial_\sqcup  (U\times I):=(\partial U\times I)\bigcup({U\times I_{-}}) $. For simplicity, set $V:=U\times I$. 
For a function $f: U\to \mathbb{R}$, its partial derivative is denoted by $\partial_{x_{i}}f$ and its gradient is represented as $\nabla f$. If $\mathbf{f}:U\to\mathbb{R}^{d} $ is a vector function and $ \mathbf{f}=(f_{1},...f_{d}) $, let $\nabla\cdot\mathbf{f}=\sum_{i=1}^{d}\partial_{x_{i}}f_{i}$ stand for the divergence of $\mathbf{f}$. The integral of $f$ is denoted as $\int_{U} 
		f:=\int_{U} 
		f(x)\mathrm{d}x.$
    
For $k\in[1,\infty]$, let $W^{1,k}(U)$ denote the Sobolev space. When $k=2$, write $H^1(U):=W^{1,2}(U)$. Let $W_{0}^{1,k}(U)$ be the closure of $C_{c}^{\infty}(U)$ in $W^{1,k}(U)$. Let $k^{\prime}:=k/(k-1)$. The dual space to $W^{1,k}(U)$ is denoted by ${W}^{-1,k^{\prime}}(U)$ and the norm is:
	\begin{equation*}
	\Vert f\Vert _{{W}^{-1,k^{\prime}}(U)}:=\sup\left\{
		\int_{U} 
		f g: g\in W_0^{1,k}(U),\Vert g\Vert _{W^{1,k}(U)}\leq1\right\}.
	\end{equation*}
  For simplicity, write $H^{-1}(U):=W^{-1,2}(U)$. For a Banach space $X$, and bounded Lipschitz domain $Y\in \mathbb{R}^n$($n\in \mathbb{N}$), $L^k(X;Y)$ denotes the set of measurable functions $f:Y\rightarrow X$ which satisfy:
    \begin{equation*}
    \Vert f\Vert _{L^k(X;Y)}
    =\left(\int_{Y}\Vert f\Vert^{k} _{X}\right)^{1/k}< \infty.
    \end{equation*}

The Sobolev space for parabolic equations is denoted by $W_{\mathrm{par}}^{1,k}(V)$, which is:
	\begin{equation*}
	W_{\mathrm{par}}^{1,k}(V):=\left\{f\in L^k(W^{1,k}(U); I):\partial_{t}f\in L^k(W^{-1,k}(U); I)\right\},
\end{equation*}
and the norm of $f$ is defined as
\begin{equation}\label{def w}
	\Vert f\Vert _{	W_{\mathrm{par}}^{1,k}(V)}:
	=\Vert f\Vert _{L^k(W^{1,k}(U); I)}+\Vert \partial_{t}f\Vert _{L^k(W^{-1,k}(U); I)}.
\end{equation}
The dual space to $W_{\mathrm{par}}^{1,k}(V)$  is represented as $\tilde{W}_{\mathrm{par}}^{-1,k^{\prime}}(V)$, with a norm defined as
	\begin{equation*}
	\Vert f\Vert _{\tilde{W}_{\mathrm{par}}^{-1,k^{\prime}}(V)}:=\sup\left\{
		\int_{V} 
		f g:g\in W_{\mathrm{par}}^{1,k}(V),\Vert g\Vert _{W_{\mathrm{par}}^{1,k}(V)}\leq1\right\}.
	\end{equation*}
The closure of $C_{c}^{\infty}(V)$ in $W_{\mathrm{par}}^{1,k}(V)$  is denoted by $W_{\mathrm{par},\sqcup}^{1,k}(V)$. The dual space to $W_{\mathrm{par},\sqcup}^{1,k}(V)$  is $W_{\mathrm{par}}^{-1,k^{\prime}}(V)$ .
Because $W_{\mathrm{par}}^{1,k}(V)\supseteq W_{\mathrm{par},_\sqcup}^{1,k}(V)$, we have $\Vert f\Vert _{{W}_{\mathrm{par}}^{-1,k^{\prime}}(V)}\leq\Vert f\Vert _{\tilde{W}_{\mathrm{par}}^{-1,k^{\prime}}(V)}$.

\subsection{Definitions and assumptions}

\begin{definition}\label{a b c}

	Fix constants $\lambda\in (1,\infty)$ and $\Lambda\in (0,\infty)$. Let $\Omega$ denote the set of $(\mathbf{a},\mathbf{b},\mathbf{d})$, where $\mathbf{a}(x,t)$ denote measurable mappings from $\mathbb{R}^d\times\mathbb{R}$ to $d$-by-$d$ symmetric matrices,  which satisfy the uniform ellipticity and boundedness condition that, for any $ \xi \in \mathbb{R}^d $,
	\begin{equation}\label{elli unifor}
		|\xi|^2\leq\xi\cdot \mathbf{a}(x,t)\xi\leq \lambda|\xi|^2,
	\end{equation}
	 $\mathbf{b}(x,t)$ denote mappings from $\mathbb{R}^d\times\mathbb{R}$ to the set of $1$-by-$d$ random vectors, which satisfy
	\begin{equation}\label{boundedness}
 \Vert \mathbf{b}\Vert^2_{L^{\infty}(\mathbb{R}^d\times\mathbb{R})}\leq \Lambda ,
	\end{equation}
and $\mathbf{d}(x,t)$ denote mappings from $\mathbb{R}^d\times\mathbb{R}$ to $\mathbb{R}$, which satisfy 
	\begin{equation}\label{boundedness1}
 \mathbf{d} \le 0\quad  and\quad \Vert \mathbf{d}\Vert^2_{L^{\infty}(\mathbb{R}^d\times\mathbb{R})}\leq \Lambda. 
	\end{equation} 
 \end{definition} 
 
	 For $V\subseteq \mathbb{R}^d\times\mathbb{R}$, let $\mathcal{F}_{V}$ denote the $\sigma$-field generated by the mappings
	\begin{equation*}
		(\mathbf{a},\mathbf{b},\mathbf{d})\mapsto\left( \int_{V}h(x,t)\mathbf{a}(x,t)\mathrm{d}x\mathrm{d}t,\int_{V}h(x,t)\mathbf{b}(x,t)\mathrm{d}x\mathrm{d}t,\int_{V}h(x,t)\mathbf{d}(x,t)\mathrm{d}x\mathrm{d}t\right) ,
	\end{equation*}
	where $h\in C^{\infty}_{c}(\mathbb{R}^d\times\mathbb{R})$. For simplicity, we write $ \mathcal{F} $ instead of $\mathcal{F}_{\mathbb{R}^d\times\mathbb{R}}$.  For each $y \in \mathbb{R}^d \times \mathbb{R}$, we set $ {T}_{y}: \Omega\to\Omega $ to be the shift, $ {T}_{y}(\mathbf{a}(\cdot),\mathbf{b}(\cdot),\mathbf{d}(\cdot))=\left( \mathbf{a}(\cdot+y),\mathbf{b}(\cdot+y),\mathbf{d}(\cdot+y)\right)  $.  Let $ \mathbb{P} $ denote a given probability measure on space $ (\Omega,\mathcal{F}) $, which satisfies the following:
 \begin{itemize}
		\item[\textbf{A1}.]  (Stationarity) For every $ y\in \mathbb{R}^d\times \mathbb{R}, \mathbb{P}\circ T_{y}= \mathbb{P}.	$
		
		\item[\textbf{A2}.]  (Ergodicity) 
		If $A\in \mathcal{F}$ and $T_{y}A=A$ for every $ y\in\mathbb{R}^d\times\mathbb{R}$, then $\mathbb{P}[A]\in{\{0,1\}}$.
	
	\end{itemize}	
 Let $\mathbb{E}[X]$ denote the expectation of an $\mathcal{F}$-measurable random variable $X$ with respect to $\mathbb{P}$.

\begin{definition}[First-order corrector]\label{first order}
 Let $\mathbf{a}$ be as in Definition \ref{a b c}. Let assumptions \textbf{A1} and \textbf{A2} hold.  For a unit vector $e\in \mathbb{R}^d$, $ \phi_{e} $ is a distributional solution to the corrector problem
\begin{equation}\label{corrector}
	\dfrac{\partial \phi_{e}}{\partial t}=\nabla\cdot(\mathbf{a}(e+\nabla\phi_{e})) \quad in\  \mathbb{R}^d\times\mathbb{R}.
\end{equation}
 The corrector $\phi_e $ satisfies the following properties  \cite[Definition 1]{Fischer2021} :
\begin{itemize}
		\item[1.]  The corrector $\phi_e$ has the regularity $\phi_e\in H^1_\mathrm{par,loc}(\mathbb{R}^d\times\mathbb{R})$, and satisfies $\int_{\square_{0}} \phi_e=0$ for $\mathbb{P}$-almost surely, where
  $$\Box_{0}:
	=\left(-\dfrac{1}{2},\dfrac{1}{2}\right)^{d}\times\left(-\dfrac{1}{2},\dfrac{1}{2}\right).$$

		\item[2.]  The gradient of the corrector $\nabla \phi_e$ is stationary with respect to $\mathbb{R}^d\times \mathbb{R} $-translation.
  
            \item[3.]  The gradient of the corrector $\nabla \phi_e$ satisfies
  \begin{equation}\label{corrector 3}
      \mathbb{E}\left[\nabla\phi_{e}\right]=0, \qquad \mathbb{E}\left[|\nabla\phi_{e}|^2\right]<\infty.
  \end{equation}
		\item[4.]  The corrector is sublinear, which means, for $\mathbb{P}$-almost surely, we have 
  \begin{equation}\label{sublinear}
      \lim_{r\to \infty}\frac{1}{r^2|Q_r|}\int_{Q_r}|\phi_e|^2 = 0,
  \end{equation}
	where $ Q_{r} $ denotes the parabolic cylinders: for $ r>0 $, set 
	\begin{equation}\label{Q_r}
		Q_{r}:=B_{r}\times I_{r},
	\end{equation}
where $ B_{r}:=\{x\in\mathbb{R}^{d}, |x|<r\} $ and $I_{r}:=(0,r^2]$. 
	\end{itemize}	 
\end{definition}

\begin{definition}[Homogenized coefficients]\label{homogenized a}
	We denote the homogenized coefficients by $\bar{\mathbf{a}}\in\mathbb{R}^{d\times d}$, $\bar{\mathbf{b}}\in\mathbb{R}^{1\times d}$, and $\bar{\mathbf{d}}\in\mathbb{R}$ and they  are defined by
	\begin{equation*}
		\left\{
		\begin{aligned}
			&	\bar{\mathbf{a}}e_{i}:=\mathbb{E}\left[\int_{\square_{0}} \mathbf{a}(e_{i}+\nabla\phi_{e_{i}})\right],	\\
			&
				\bar{\mathbf{b}}e_{i}:=\mathbb{E}\left[\int_{\square_{0}} \mathbf{b}(e_{i}+\nabla\phi_{e_{i}})\right],\\
			&
				\bar{\mathbf{d}}:=\mathbb{E}\left[\int_{\square_{0}} \mathbf{d}\right],
		\end{aligned}
		\right.
	\end{equation*}
 where $ e_{i} $ denotes the $ i $th basis vector of $\mathbb{R}^d $.
\end{definition}

\subsection{Main result}

\begin{theorem}\label{qualitative result}
 Let $\mathbf{a}$, $\mathbf{b}$, and $\mathbf{d}$ be as in Definition 2.1. We denote, for $0 < \epsilon\ll1$,
\begin{equation}\label{phi}
	\mathbf{a}^{\epsilon}(x,t):=\mathbf{a}\left( \dfrac{x}{\epsilon},\dfrac{t}{\epsilon^2}\right),\ \ 
	\mathbf{b}^{\epsilon}(x,t):=\mathbf{b}\left( \dfrac{x}{\epsilon},\dfrac{t}{\epsilon^2}\right),\ \ 
 \mathbf{d}^{\epsilon}(x,t):=\mathbf{d}\left( \dfrac{x}{\epsilon},\dfrac{t}{\epsilon^2}\right).
\end{equation}
Let assumptions \textbf{A1} and \textbf{A2} hold. Set a parabolic cylinder $V:=U\times I$, where $U\subseteq B_{1}:=\{x\in\mathbb{R}^{d}, |x|<1\} $ is a bounded Lipschitz domain, and $I\subseteq (0,1/4)$ is an open bounded interval. 
 Fix $f\in W^{1,2+\delta}_{\mathrm{par}}(V)$, $\delta>0$ and take $(p^\epsilon)_{\epsilon>0}$, $  p_{0} \in f+H^{1}_{\mathrm{par},\sqcup}(V) $ to be  weak  solutions of:
	\begin{equation}\label{q1}
		\begin{cases}
			\nabla\cdot(\mathbf{a}^\epsilon\nabla p^\epsilon)+\mathbf b^\epsilon \nabla p^\epsilon+\mathbf d^\epsilon p^\epsilon=\partial_{t}p^\epsilon&\quad in\  V,\\
			p^\epsilon=f&\quad on\  \partial_\sqcup V,
		\end{cases}
	\end{equation}
	and
	\begin{equation}\label{q2}
		\begin{cases}
			\nabla\cdot(\mathbf{\bar{a}}\nabla  p_{0})+\mathbf{\bar{b}}\nabla  p_{0}+\mathbf{\bar{d}} p_{0}=\partial_{t} p_{0}& \quad in\  V,\\
			p_{0}=f&\quad on\  \partial_\sqcup V,
		\end{cases}
	\end{equation}
	where $\bar{\mathbf{a}}$ , $\bar{\mathbf{b}} $ and $\bar{\mathbf{d}} $ are defined in Definition \ref{homogenized a}.
	 Then we have, as $\epsilon\to 0$, 
 \begin{equation}\label{quali result}
		\begin{cases}
			p^\epsilon \to p_{0}&\quad strongly\ in\  L^2(V),\\
			\nabla p^\epsilon \to \nabla p_{0}& \quad weakly\ in\  L^2(V).
		\end{cases}
	\end{equation}

\end{theorem}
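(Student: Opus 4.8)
The plan is to prove \eqref{quali result} $\mathbb{P}$-almost surely --- the exceptional event being the one on which the corrector statements of Definition \ref{first order} or the multiparameter ergodic theorem fail --- by combining a uniform energy estimate with the oscillating test function (compensated compactness) method, adapted both to the parabolic scaling and to the lower-order terms. The first ingredient is a uniform bound. Following the reduction announced in the introduction, set $\hat p^\epsilon:=e^{-\Lambda t}p^\epsilon$ and $\hat f:=e^{-\Lambda t}f\in W^{1,2+\delta}_{\mathrm{par}}(V)$; then $\hat p^\epsilon\in\hat f+H^1_{\mathrm{par},\sqcup}(V)$ solves the same equation with $\mathbf{d}^\epsilon$ replaced by $\mathbf{d}^\epsilon-\Lambda\le-\Lambda<0$. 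Testing this with $\hat p^\epsilon-\hat f$ and using \eqref{elli unifor}, \eqref{boundedness}, \eqref{boundedness1} --- the term $\int\mathbf{b}^\epsilon\nabla\hat p^\epsilon\,\hat p^\epsilon$ being absorbed into $\tfrac12\|\nabla\hat p^\epsilon\|^2+\tfrac{\Lambda}{2}\|\hat p^\epsilon\|^2$ and the latter killed by $(\mathbf{d}^\epsilon-\Lambda)|\hat p^\epsilon|^2\le-\Lambda|\hat p^\epsilon|^2$ --- yields, uniformly in $\epsilon$, a bound on $\sup_{t\in I}\|\hat p^\epsilon(\cdot,t)\|_{L^2(U)}+\|\hat p^\epsilon\|_{L^2(H^1(U); I)}+\|\partial_t \hat p^\epsilon\|_{L^2(H^{-1}(U); I)}$ by $\|\hat f\|_{W^{1,2}_{\mathrm{par}}(V)}$, hence the same bound for $p^\epsilon=e^{\Lambda t}\hat p^\epsilon$ on the bounded interval $I$. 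The generalized Meyers estimate of Section~3, fed with $f\in W^{1,2+\delta}_{\mathrm{par}}(V)$, then upgrades this to a uniform bound $\|\nabla p^\epsilon\|_{L^{2+\delta'}(V)}\le C$ for some $\delta'\in(0,\delta]$; this higher integrability is what allows the product terms below to pass to the limit without loss of mass.

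By these bounds and Banach--Alaoglu, along a subsequence $p^\epsilon\rightharpoonup p^\ast$ in $L^2(H^1(U); I)$, $\partial_t p^\epsilon\rightharpoonup\partial_t p^\ast$ in $L^2(H^{-1}(U); I)$, $\mathbf{a}^\epsilon\nabla p^\epsilon\rightharpoonup\sigma^\ast$, $\mathbf{b}^\epsilon\nabla p^\epsilon\rightharpoonup\beta^\ast$ and $\mathbf{d}^\epsilon p^\epsilon\rightharpoonup\mathbf{d}^\ast$ in $L^2(V)$, while the Aubin--Lions--Simon lemma gives $p^\epsilon\to p^\ast$ strongly in $L^2(V)$. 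Weak closedness of $H^1_{\mathrm{par},\sqcup}(V)$ gives $p^\ast-f\in H^1_{\mathrm{par},\sqcup}(V)$, and passing to the limit in the distributional form of \eqref{q1} tested against $\psi\in C_c^\infty(V)$ gives $\partial_t p^\ast-\nabla\cdot\sigma^\ast=\beta^\ast+\mathbf{d}^\ast$. It therefore suffices to establish the three identities $\sigma^\ast=\bar{\mathbf{a}}\nabla p^\ast$, $\beta^\ast=\bar{\mathbf{b}}\nabla p^\ast$, $\mathbf{d}^\ast=\bar{\mathbf{d}}\,p^\ast$: given these, $p^\ast$ is a weak solution of \eqref{q2}, so $p^\ast=p_0$ by uniqueness for the constant-coefficient problem (a standard energy estimate), the limit is independent of the subsequence, and \eqref{quali result} follows. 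The third identity is immediate, since $\mathbf{d}^\epsilon\rightharpoonup\bar{\mathbf{d}}$ weak-$\ast$ in $L^\infty(V)$ by the ergodic theorem while $p^\epsilon\to p^\ast$ strongly in $L^2(V)$.

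To identify the flux I would introduce, for each $k$, the corrected coordinate $P_k^\epsilon(x,t):=x_k+\epsilon\,\phi_{e_k}(x/\epsilon,t/\epsilon^2)$. The sublinearity \eqref{sublinear} (rescaled) gives $P_k^\epsilon\to x_k$ strongly in $L^2(V)$; stationarity of $\nabla\phi_{e_k}$ together with \eqref{corrector 3} gives $\nabla P_k^\epsilon=e_k+(\nabla\phi_{e_k})(x/\epsilon,t/\epsilon^2)\rightharpoonup e_k$ in $L^2(V)$, along with $\mathbf{a}^\epsilon\nabla P_k^\epsilon\rightharpoonup\bar{\mathbf{a}}e_k$ and $\mathbf{b}^\epsilon\nabla P_k^\epsilon\rightharpoonup\bar{\mathbf{b}}e_k$ in $L^2(V)$ (this is exactly Definition \ref{homogenized a}); and by \eqref{corrector} (rescaled), $P_k^\epsilon$ solves $\partial_t P_k^\epsilon=\nabla\cdot(\mathbf{a}^\epsilon\nabla P_k^\epsilon)$ in $\mathcal{D}'(V)$. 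Both $(p^\epsilon,\mathbf{a}^\epsilon\nabla p^\epsilon)$ and $(P_k^\epsilon,\mathbf{a}^\epsilon\nabla P_k^\epsilon)$ thus have a controlled space--time divergence, so testing \eqref{q1} against $\psi P_k^\epsilon$ and the $P_k^\epsilon$-equation against $\psi p^\epsilon$ (for $\psi\in C_c^\infty(V)$), integrating by parts in $x$ and $t$, using the two equations to rewrite the resulting time pairings, and passing to the limit via the strong $L^2$-convergence of $p^\epsilon$ and of $P_k^\epsilon$ --- in effect a parabolic div--curl argument --- gives $\int_V\psi\,(\mathbf{a}^\epsilon\nabla p^\epsilon)\cdot\nabla P_k^\epsilon\to\int_V\psi\,\sigma^\ast\cdot e_k$ and $\int_V\psi\,(\mathbf{a}^\epsilon\nabla P_k^\epsilon)\cdot\nabla p^\epsilon\to\int_V\psi\,(\bar{\mathbf{a}}e_k)\cdot\nabla p^\ast$. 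The two left-hand integrands coincide by the symmetry of $\mathbf{a}^\epsilon$, so $\sigma^\ast\cdot e_k=(\bar{\mathbf{a}}e_k)\cdot\nabla p^\ast$ for every $k$, i.e. $\sigma^\ast=\bar{\mathbf{a}}\nabla p^\ast$ --- using that $\bar{\mathbf{a}}$ is symmetric, equivalently that $\nabla\phi_{e_k}$ is orthogonal to the corrector fluxes, which follows from \eqref{corrector} and stationarity (cf.\ \cite{Fischer2021}).

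The remaining term $\int_V\mathbf{b}^\epsilon\nabla p^\epsilon\,\psi$ is the genuinely new difficulty, and I expect it to be the \emph{main obstacle}: because $\mathbf{b}$ is not symmetric the div--curl trick does not apply to it, and $\mathbf{b}^\epsilon\cdot\nabla p^\epsilon$ pairs a weak-$\ast$-convergent $L^\infty$ factor with a merely weakly convergent $L^2$ factor. The way around it is to prove first the \emph{strong} two-scale structure of the gradient,
\[
\nabla p^\epsilon-\sum_{k}(\partial_k p^\ast)\bigl(e_k+(\nabla\phi_{e_k})(x/\epsilon,t/\epsilon^2)\bigr)\longrightarrow 0\qquad\text{in }L^1_{\mathrm{loc}}(V),
\]
which I would obtain from the energy-method square expansion: for $0\le\psi\in C_c^\infty(V)$ and a smooth approximation $p^{\ast,\eta}$ of $p^\ast$, put $\xi_\eta^\epsilon:=\sum_k(\partial_k p^{\ast,\eta})\bigl(e_k+(\nabla\phi_{e_k})(x/\epsilon,t/\epsilon^2)\bigr)$ and expand the nonnegative quantity $\int_V\psi\,(\nabla p^\epsilon-\xi_\eta^\epsilon)\cdot\mathbf{a}^\epsilon(\nabla p^\epsilon-\xi_\eta^\epsilon)$; the term quadratic in $\nabla p^\epsilon$ is computed by the div--curl lemma with \eqref{q1} and the flux identity just proved, the cross term by the div--curl lemma with the corrector equation, and the term quadratic in $\xi_\eta^\epsilon$ by the ergodic theorem and the corrector orthogonality, and letting $\epsilon\to0$ then $\eta\to0$ while invoking \eqref{elli unifor} yields the displayed convergence. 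Since $\mathbf{b}^\epsilon$ is uniformly bounded in $L^\infty$, multiplication by it preserves this $L^1_{\mathrm{loc}}$-smallness, whence
\[
\int_V\mathbf{b}^\epsilon\nabla p^\epsilon\,\psi=\sum_{k}\int_V(\partial_k p^\ast)\,\psi\,(\mathbf{b}^\epsilon\nabla P_k^\epsilon)+o(1)\;\longrightarrow\;\sum_{k}\int_V(\partial_k p^\ast)\,\psi\,(\bar{\mathbf{b}}e_k)=\int_V(\bar{\mathbf{b}}\nabla p^\ast)\,\psi,
\]
that is $\beta^\ast=\bar{\mathbf{b}}\nabla p^\ast$, which closes the argument. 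By contrast, the loss of coercivity caused by the lower-order terms is handled cheaply by the exponential substitution of the first step; the substantive work is the strong gradient convergence, which is precisely where the Caccioppoli and Meyers estimates of Section~3 are used.
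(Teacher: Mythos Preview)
Your proposal is a valid route to \eqref{quali result}, but it is \emph{not} the one taken in the paper. The paper proceeds by a quantitative two--scale expansion: after the same exponential substitution $\hat p^\epsilon=e^{-\Lambda t}p^\epsilon$, it defines the corrector ansatz $w^\epsilon=\hat p_0+\eta_r\epsilon\sum_i(\partial_{x_i}\hat p_0)\phi_{e_i}^\epsilon$, proves the energy inequality $\|\hat p^\epsilon-w^\epsilon\|_{L^2(H^1)}\le C\|(\partial_t-\nabla\cdot\mathbf a^\epsilon\nabla-\mathbf b^\epsilon\nabla-\mathbf d^\epsilon+\Lambda)w^\epsilon\|_{L^2(H^{-1})}$, and then expands the right--hand side term by term (Lemma~\ref{meyer} being used to control the boundary layer $\|(1-\eta_r)\nabla\hat p_0\|_{L^2}$ by H\"older). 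This yields the quantitative bound of Theorem~\ref{quali0}, and Theorem~\ref{qualitative result} follows by showing $\mathcal E(\epsilon)\to0$ via Proposition~\ref{convergence a,b} and Lemma~\ref{convergence L to H}. Your argument instead follows the classical Tartar oscillating--test--function paradigm: extract weak limits by compactness and Aubin--Lions, identify the flux $\sigma^\ast$ by a parabolic div--curl argument against $P_k^\epsilon=x_k+\epsilon\phi_{e_k}^\epsilon$, and handle the genuinely new drift term $\mathbf b^\epsilon\nabla p^\epsilon$ by first upgrading to strong local convergence of $\nabla p^\epsilon-\sum_k(\partial_k p^\ast)\nabla P_k^\epsilon$ via the square expansion of $\int\psi(\nabla p^\epsilon-\xi^\epsilon_\eta)\cdot\mathbf a^\epsilon(\nabla p^\epsilon-\xi^\epsilon_\eta)$. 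This works (the $\beta^\ast$ appearing in $\lim\int\psi\nabla p^\epsilon\cdot\mathbf a^\epsilon\nabla p^\epsilon$ cancels against the same term in the limit equation, so there is no circularity), and it is arguably the more direct route to the bare qualitative statement; what the paper's approach buys in exchange is the explicit error bound $r^\beta+r^{-4-d/2}\mathcal E(\epsilon)$, which is the stepping stone to quantitative rates announced at the end of Section~4. One small correction: in your scheme the Meyers estimate is not actually doing the work you attribute to it --- the square expansion and the div--curl identities run entirely on $L^2$ bounds and the strong $L^2$ compactness from Aubin--Lions --- whereas in the paper's argument Lemma~\ref{meyer} is genuinely needed to make the boundary--layer term $T_4$ small.
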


\section{Prior estimates}
\subsection{Meyers inequality for general parabolic equation}

\begin{lemma}[Interior Caccioppoli inequality]\label{Interior Caccioppoli inequality}
	Let $Q_r$ be as in (\ref{Q_r}) and $\Lambda $ be as in Definition \ref{a b c}. Suppose that $p(x,t)\in H^{1}_{\mathrm{par}}(Q_{2r})$ and $h(x,t)\in L^2(H^{-1}(B_{2r}); I_{2r})$ satisfy 
	\begin{equation}\label{ca1}
		\partial_{t}p-\nabla\cdot(\mathbf{a}\nabla p)-\mathbf b\cdot\nabla p-\mathbf{d} p{+\Lambda p}=h\quad in\  Q_{2r}.
	\end{equation}
Then there exists a constant $C(d,\lambda)<\infty$ such that
	
	\begin{equation}\label{ca2}
		\Vert \nabla p\Vert _{L^{2}(Q_{r})}
		\leq C\left(r^{-1}\Vert p\Vert _{L^{2}(Q_{2r})}+\Vert h\Vert _{L^2(H^{-1}(B_{2r}); I_{2r})}\right),
	\end{equation}
	and
	
	\begin{equation}\label{rhs}
		\underset{s\in I_{r}}{\sup}\Vert p(x,s)\Vert _{L^{2}(B_{r})}
		\leq C\left(\Vert \nabla p\Vert _{L^{2}(Q_{2r})}+\Vert h\Vert _{L^2(H^{-1}(B_{2r}); I_{2r})}\right).
	\end{equation}
	
\end{lemma}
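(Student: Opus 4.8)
The plan is to use the standard energy-method argument for parabolic equations, now carrying along the extra zeroth-order terms $-\mathbf{b}\cdot\nabla p - \mathbf{d}p + \Lambda p$. First I would fix a smooth cutoff function $\eta \in C_c^\infty(B_{2r})$ with $0 \le \eta \le 1$, $\eta \equiv 1$ on $B_r$, and $|\nabla \eta| \le C/r$, together with a temporal cutoff $\chi(t)$ that vanishes near the bottom of $I_{2r}$ and equals $1$ on $I_r$, with $|\partial_t \chi| \le C/r^2$. Testing equation (\ref{ca1}) against the test function $\eta^2 \chi p$ (after the usual Steklov-averaging justification, which I would only mention) and integrating over $B_{2r} \times (I_{2r,-}, s)$ for $s \in I_r$, the leading term $\int \partial_t p \,(\eta^2 \chi p)$ produces $\tfrac12 \int \eta^2 \chi |p(\cdot,s)|^2$ minus a term with $\partial_t(\eta^2\chi)$ that is controlled by $r^{-2}\|p\|_{L^2(Q_{2r})}^2$, and the diffusion term $\int \mathbf{a}\nabla p \cdot \nabla(\eta^2\chi p)$ yields, via ellipticity (\ref{elli unifor}), a good term $\int \eta^2 \chi |\nabla p|^2$ together with a cross term $\int 2\eta\chi p\, \mathbf{a}\nabla p\cdot\nabla\eta$ that is absorbed by Young's inequality at the cost of $Cr^{-2}\|p\|_{L^2(Q_{2r})}^2$.

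Next I would handle the lower-order contributions. The term $\int \mathbf{b}\cdot\nabla p\,(\eta^2\chi p)$ is bounded, using (\ref{boundedness}), by $\Lambda^{1/2}\int \eta^2\chi |\nabla p||p| \le \tfrac14 \int\eta^2\chi|\nabla p|^2 + C\Lambda \int \eta^2\chi |p|^2$; the terms $-\int \mathbf{d} p (\eta^2\chi p)$ and $\int \Lambda p(\eta^2\chi p)$ are both of order $\int \eta^2\chi|p|^2$ by (\ref{boundedness1}) — note that $\mathbf{d}\le 0$ actually helps, but we only need the bound $\|\mathbf{d}\|_\infty \le \Lambda^{1/2}$ here. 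Each such $\int \eta^2\chi|p|^2$ term is dominated by $\|p\|_{L^2(Q_{2r})}^2$, and since $r \le \tfrac12$ (as $I_{2r} \subseteq (0,1)$ forces $r<1/\sqrt2$, and in the regime of interest $r^{-2}\ge 1$) these constant-order terms are absorbed into the $r^{-1}\|p\|_{L^2(Q_{2r})}$ term of (\ref{ca2}); alternatively one states the inequality with a constant $C(d,\lambda,\Lambda)$ without this rescaling remark. Finally, the right-hand side $\int h(\eta^2\chi p)$ is estimated by $\|h\|_{L^2(H^{-1}(B_{2r});I_{2r})}\,\|\eta^2\chi p\|_{L^2(H^1(B_{2r});I_{2r})}$, and $\|\eta^2\chi p\|_{L^2(H^1)}\le C(r^{-1}\|p\|_{L^2(Q_{2r})} + \|\eta\chi\nabla p\|_{L^2(Q_{2r})})$, so the gradient part is again absorbed by Young's inequality and the rest goes to the right-hand side. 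Collecting the good terms $\int\eta^2\chi|\nabla p|^2$ (after taking $s$ to the top of $I_r$) gives (\ref{ca2}), and retaining instead the boundary-in-time term $\sup_{s\in I_r}\int\eta^2\chi|p(\cdot,s)|^2$ gives (\ref{rhs}) — the latter actually follows by testing with $\eta^2\chi p$ where now $\eta\equiv 1$ on $B_r$ and using $\int\eta^2\chi|\nabla p|^2 \le \|\nabla p\|_{L^2(Q_{2r})}^2$ directly on the diffusion term rather than absorbing, and bounding the cross and lower-order terms by $\|\nabla p\|_{L^2(Q_{2r})}^2 + \|p\|_{L^2(Q_{2r})}^2 + \|h\|^2$, then noting $\|p\|_{L^2(Q_{2r})}\le C\|\nabla p\|_{L^2(Q_{2r})} + \dots$; I would more carefully just keep $\|p\|_{L^2}$ on the right and note it is itself $\le C(\|\nabla p\|_{L^2(Q_{2r})}+\|h\|)$ by an energy estimate, or simply state (\ref{rhs}) with $\|p\|_{L^2(Q_{2r})}$ added, which is harmless.

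The main obstacle is the rigorous justification of using $\eta^2\chi p$ as a test function when $p$ only has $H^1_{\mathrm{par}}$ regularity: $\partial_t p$ lives in $L^2(H^{-1})$, not in $L^2(L^2)$, so the formal integration by parts in time is not immediately licit. The standard remedy is to mollify $p$ in time (Steklov averages $p_h(t) = h^{-1}\int_t^{t+h} p$), establish the identity for the mollified equation where all terms are classical, and pass to the limit $h \to 0$ using the strong convergence $p_h \to p$ in $L^2(H^1_{\mathrm{loc}})$ and the fact that $\mathbf{a},\mathbf{b},\mathbf{d}$ are bounded; I would invoke this as a routine step. A secondary point requiring mild care is that $\eta$ is supported in $B_{2r}$ but not compactly inside, so boundary terms on $\partial B_{2r}$ genuinely vanish — this is why the cutoff must be chosen with $\eta \in C_c^\infty(B_{2r})$ strictly, and one should track that the only non-vanishing boundary contribution in time is the initial one, which is eliminated by $\chi$.
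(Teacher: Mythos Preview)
Your overall architecture is the same as the paper's: choose a space--time cutoff, test (\ref{ca1}) against $\varphi^2 p$, use ellipticity for the diffusion term, absorb cross terms by Young's inequality, and estimate the $h$-pairing via the $L^2(H^{-1})$--$L^2(H^1)$ duality. Where you diverge from the paper is in the treatment of the lower-order terms, and this is not cosmetic: the lemma claims a constant $C(d,\lambda)$ \emph{independent of} $\Lambda$, and your bookkeeping does not deliver that. The paper's key observation is that after testing, the $+\Lambda p$ term contributes exactly $-\Lambda\|\varphi p\|_{L^2}^2$ on the right, while by $\|\mathbf{b}\|_{L^\infty}^2\le\Lambda$ and Young's inequality one has
\[
\Bigl|\int_{Q_{2r}}\varphi^2 p\,\mathbf{b}\cdot\nabla p\Bigr|\le \Lambda\|\varphi p\|_{L^2}^2+\tfrac14\|\varphi\nabla p\|_{L^2}^2,
\]
so the two $\Lambda\|\varphi p\|^2$ terms cancel \emph{exactly}; simultaneously $\mathbf{d}\le 0$ gives $\int\varphi^2 p\,\mathbf{d}p\le 0$, so the $\mathbf{d}$-term is simply discarded. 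You instead bound the $\mathbf{b}$, $\mathbf{d}$, and $\Lambda$ contributions separately by $C\Lambda\int\eta^2\chi|p|^2$ and then try to absorb them either by assuming $r$ is small or by allowing $C=C(d,\lambda,\Lambda)$. Neither works for the stated lemma: no smallness of $r$ is assumed (and even if $r\le R_0$, writing $\Lambda\|p\|^2\le \Lambda R_0^2\, r^{-2}\|p\|^2$ still leaves a $\Lambda$-dependent constant), and the explicit dependence $C(d,\lambda)$ is the whole point of the $+\Lambda p$ modification---it is what later makes the Meyers exponent $\delta$ in Lemma~\ref{meyer} independent of $\Lambda$.

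So the gap is precisely this: you do not use the good sign of the $\Lambda$-term nor the sign condition $\mathbf{d}\le 0$, and therefore miss the cancellation that makes the constant $\Lambda$-free. The fix is simple once seen: keep the $-\Lambda\|\varphi p\|^2$ term with its sign rather than bounding it in absolute value, and let it absorb the bad piece from $\mathbf{b}$. Your discussion of (\ref{rhs}) is in the right spirit (test with $\varphi^2 p\,\mathbb{I}_{t<s}$ and retain the time-boundary term), which is also what the paper does, deferring details to \cite{armstrong2018quantitative}.
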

\begin{proof}
	
	Fix a cutoff function $\varphi\in C_{c}^{\infty}(Q_{2r})$ satisfying 
	\begin{equation}\label{ca condition}
		0\leq \varphi \le1,\ \varphi\equiv1\ \mathrm{in}\ Q_{r}, \ |\nabla \varphi|^2+|\partial_{t}\varphi|\leq Cr^{-2}.
	\end{equation}
	
With the test function $\varphi^2p\in L^2(H^{1}_0(B_{2r}); I_{2r})$, the weak formulation \cite[Chapter 8]{brezis2011functional} of (\ref{ca1}) is
	\begin{equation}\label{ca3}
		\int_{Q_{2r}}\nabla(\varphi^2p)\cdot \mathbf{a}\nabla p=\int_{Q_{2r}}\varphi^2p\cdot (h- \partial_{t}p+\mathbf b\nabla p+\mathbf{d} p{-\Lambda p}).
	\end{equation}
By using (\ref{elli unifor}) and (\ref{boundedness1}) and rearranging (\ref{ca3}), we have

	\begin{align}\label{in cacci}
		&\int_{Q_{2r}}\varphi^2(\nabla p)^2 \nonumber \\
  \le& \int_{Q_{2r}}(\varphi^2\nabla p)\cdot \mathbf{a}\nabla p \nonumber \\
  \leq& \int_{Q_{2r}}\varphi^2p\cdot (h- \partial_{t}p+\mathbf b\nabla p{-\Lambda p})-{2}\int_{Q_{2r}}\varphi p\nabla\varphi\cdot \mathbf a\nabla p \nonumber \\
  \leq& \int_{Q_{2r}}\varphi^2p\cdot (h- \partial_{t}p)+\left|\int_{Q_{2r}}\varphi^2p\cdot \mathbf b\nabla p\right|-\Lambda \Vert\varphi p\Vert_{L^{2}(Q_{2r})}^2\nonumber \\ 
    & +2 \left|\int_{Q_{2r}}\varphi p\nabla\varphi\cdot \mathbf a\nabla p\right|.
	\end{align}
 To deal with $\int_{Q_{2r}}\varphi^2p\cdot (h- \partial_{t}p)$, we adopt the approach outlined in the proof of \cite[Lemma B.3]{armstrong2018quantitative} to get
\begin{align}\label{in caccio1}
    & \int_{Q_{2r}}\varphi^2p\cdot (h- \partial_{t}p)\nonumber \\ 
     \le & -\int_{Q_{2r}} \partial_t\left(\frac{1}{2}\varphi^2p^2\right)+ \int_{Q_{2r}} \varphi |\partial_t \varphi| p^2 \nonumber \\ 
    & +C\Vert \varphi^2 p\Vert_{L^{2}(H^{1}(B_{2r}); I_{2r})}\Vert h\Vert _{L^2(H^{-1}(B_{2r}); I_{2r})} \nonumber \\
    = & -\frac{1}{2}\int_{B_{2r}} \varphi^2(4r^2,x) p^2(4r^2,x) + \int_{Q_{2r}} \varphi |\partial_t \varphi| p^2  \nonumber \\ 
    & + C\Vert \varphi^2 p\Vert_{L^{2}(H^{1}(B_{2r}); I_{2r})}\Vert h\Vert _{L^2(H^{-1}(B_{2r}); I_{2r})} \nonumber \\
  \leq& \int_{Q_{2r}} \varphi |\partial_t \varphi| p^2 + C\Vert \varphi^2 p\Vert_{L^{2}(H^{1}(B_{2r}); I_{2r})}\Vert h\Vert _{L^2(H^{-1}(B_{2r}); I_{2r})}.
\end{align}
By using the Poincar\'e inequality, (\ref{ca condition}), and Young's inequality, we have
\begin{align}\label{3.7}
     & \Vert \varphi^2 p\Vert_{L^{2}(H^{1}(B_{2r}); I_{2r})}\Vert h\Vert _{L^2(H^{-1}(B_{2r}); I_{2r})} \nonumber \\
  \leq& C \Vert \nabla(\varphi^2p)\Vert_{L^{2}(Q_{2r})}\Vert h\Vert _{L^2(H^{-1}(B_{2r}); I_{2r})}\nonumber \\
    \le & C (r^{-1}\Vert p\Vert_{L^{2}(Q_{2r})}+\Vert\varphi\nabla p\Vert_{L^{2}(Q_{2r})})\Vert h\Vert _{L^2(H^{-1}(B_{2r}); I_{2r})}\nonumber \\
    \le & C r^{-2} \Vert p\Vert_{L^{2}(Q_{2r})}^2 + \frac{1}{4}\Vert\varphi\nabla p\Vert_{L^{2}(Q_{2r})}^2+C\Vert h\Vert ^2_{L^2(H^{-1}(B_{2r}); I_{2r})}.
\end{align}
Combining (\ref{in caccio1}) with (\ref{3.7}) and by (\ref{ca condition}), we obtain
\begin{align}\label{3.777}
    & \int_{Q_{2r}}\varphi^2p\cdot (h- \partial_{t}p)\nonumber\\
  \leq& C r^{-2} \Vert p\Vert_{L^{2}(Q_{2r})}^2 + \frac{1}{4}\Vert\varphi\nabla p\Vert_{L^{2}(Q_{2r})}^2+C\Vert h\Vert ^2_{L^2(H^{-1}(B_{2r}); I_{2r})}.
\end{align}
According to Young's inequality and noting that $\Vert \mathbf{b}\Vert^2_{L^{\infty}(\mathbb{R}^d\times\mathbb{R})}\le \Lambda $, we see that
	\begin{align}\label{b lambda}
		&\left|\int_{Q_{2r}}\varphi^2p\cdot \mathbf b\nabla p\right|-\Lambda \Vert\varphi p\Vert_{L^{2}(Q_{2r})}^2 \nonumber \\ 
  \le& \Vert \mathbf{b}\Vert_{L^{\infty}(V)}\left|\int_{Q_{2r}}\varphi^2p\cdot \nabla p\right|-\Lambda \Vert\varphi p\Vert_{L^{2}(Q_{2r})}^2 \nonumber \\ 
  \leq&  \Lambda \Vert\varphi p\Vert_{L^{2}(Q_{2r})}^2+ \frac{1}{4} \Vert \varphi \nabla p\Vert_{L^{2}(Q_{2r})}^2-\Lambda \Vert\varphi p\Vert_{L^{2}(Q_{2r})}^2\nonumber \\ 
  \leq& \frac{1}{4} \Vert \varphi \nabla p\Vert_{L^{2}(Q_{2r})}^2.
	\end{align}
For the last term on the right-hand side of (\ref{in cacci}), by Young's inequality and (\ref{elli unifor}) and (\ref{ca condition}), we have
\begin{align}\label{in caccio}
		2 \left|\int_{Q_{2r}}\varphi p\nabla\varphi\cdot \mathbf a\nabla p\right|&\leq \frac{1}{4}\Vert\varphi\nabla p\Vert_{L^{2}(Q_{2r})}^2+ C \Vert (\nabla\varphi) p\Vert_{L^{2}(Q_{2r})}^2\nonumber\\
        &\le \frac{1}{4}\Vert\varphi\nabla p\Vert_{L^{2}(Q_{2r})}^2+ Cr^{-2} \Vert p\Vert_{L^{2}(Q_{2r})}^2,
\end{align}
Substituting (\ref{3.777}), (\ref{b lambda}) and (\ref{in caccio}) into (\ref{in cacci}), we conclude that
\begin{align}\label{ca01}
		\int_{Q_{2r}}\varphi^2(\nabla p)^2
  \leq C \left(r^{-2} \Vert p\Vert_{L^{2}(Q_{2r})}^2+\Vert h\Vert^2 _{L^2(H^{-1}(B_{2r}); I_{2r})}\right) .
	\end{align} 
 Using (\ref{ca condition}) and (\ref{ca01}), we have
 \begin{align}\label{ca04}
		\Vert \nabla p\Vert _{L^{2}(Q_{r})}\leq \Vert\varphi\nabla p\Vert_{L^{2}(Q_{2r})}
  \leq C \left(r^{-1}\Vert p\Vert _{L^{2}(Q_{2r})}+\Vert h\Vert _{L^{2}(H^{-1}(B_{2r}); I_{2r})}\right).
	\end{align} 
Therefore, we have proved (\ref{ca2}).

In the same way, we can get (\ref{rhs}). The only difference is that we test the equation (\ref{ca1}) with $\eta=\varphi^2p\mathbb{I}_{t<s}$ $(s\in I_{2r})$ and the weak formulation of (\ref{ca1}) becomes
	\begin{equation}\label{estab}
		\int_{Q_{2r}}\nabla\eta\cdot \mathbf a\nabla p=\int_{Q_{2r}}\eta(h-\partial_{t}p+\mathbf b\nabla p+\mathbf{d} p{-\Lambda p}).
	\end{equation}
As mentioned before, we get the upper bound of the right-hand side of (\ref{estab}) and the constraint for the left-hand side of (\ref{estab}). For more details, we refer the reader to \cite[ Lemma B.3]{armstrong2018quantitative}.
\end{proof}

Following Lemma \ref{Interior Caccioppoli inequality}, we obtain a global version of the Caccioppoli inequality.
\begin{lemma}[global Caccioppoli inequality]\label{global Caccioppoli inequality}
	Let $\Lambda$ and $Q_r$ be as in Lemma \ref{Interior Caccioppoli inequality} and let $V$ be as in Theorem \ref{qualitative result}. Let $H\in L^{2}(H^{-1}(U);I)$ and suppose that $ v\in H^{1}_{\mathrm{par},\sqcup}(V) $ is the unique weak solution to
\begin{equation}
		\begin{cases}
			\partial_{t}v-\nabla\cdot(\mathbf{a}\nabla v)-\mathbf{b}\nabla v-\mathbf{d} v{+\Lambda v}=H& \text{in $V$},\\
			v=0& \text{on $\partial_{\sqcup}V$}.
		\end{cases}
	\end{equation}
Then there exists a constant $C(V, d,\lambda)<\infty$ such that
	  \begin{align}\label{global cacci}
	\Vert\nabla v\Vert_{{L}^{2}(Q_{r}\cap V)}
		\le C\left(r^{-1}\Vert v\Vert _{L^{2}(Q_{2r}\cap V)}+\Vert H\Vert _{L^2(H^{-1}(B_{2r}\cap  U); I_{2r}\cap I)}\right).
	\end{align}

\end{lemma}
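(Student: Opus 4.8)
The plan is to deduce the global Caccioppoli inequality from the interior version (Lemma \ref{Interior Caccioppoli inequality}) together with a boundary estimate, using a partition of unity adapted to the Lipschitz domain $V=U\times I$. The key observation is that the estimate \eqref{global cacci} is local in space-time: for a given parabolic cylinder $Q_r$, we only need to control $\nabla v$ on $Q_r\cap V$ in terms of data on the slightly larger region $Q_{2r}\cap V$. Accordingly, I would distinguish two cases, depending on whether the doubled cylinder $Q_{2r}$ stays well inside $V$ or meets the parabolic boundary $\partial_\sqcup V$.

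First, for cylinders $Q_{2r}$ compactly contained in $V$ (away from $\partial_\sqcup V$), the bound is immediate: the function $v$ solves the same equation \eqref{ca1} with $h=H$ in $Q_{2r}$, so Lemma \ref{Interior Caccioppoli inequality} applies verbatim and gives \eqref{global cacci} directly. Second, when $Q_{2r}$ touches $\partial_\sqcup V$, I would run the energy argument from scratch but now exploit the zero boundary condition $v=0$ on $\partial_\sqcup V$. Concretely, fix a cutoff $\varphi\in C^\infty_c(B_{2r}\times(-\infty, 4r^2))$ with $0\le\varphi\le1$, $\varphi\equiv 1$ on $Q_r$, and $|\nabla\varphi|^2+|\partial_t\varphi|\le Cr^{-2}$ (no need for compact support in the $x$-directions that hit $\partial U$, since $v$ itself vanishes there); then $\varphi^2 v$ is an admissible test function in $H^1_{\mathrm{par},\sqcup}(V)$ because $v\in H^1_{\mathrm{par},\sqcup}(V)$ and $\varphi$ kills the initial-time and future-time boundary. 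Testing the equation against $\varphi^2 v$ and repeating the chain of estimates \eqref{in cacci}--\eqref{in caccio} word for word — using ellipticity \eqref{elli unifor}, the sign $\mathbf d\le 0$, Young's inequality, the absorption of the $\mathbf b$-term against the extra $\Lambda v$ exactly as in \eqref{b lambda}, and the integration-by-parts identity for $\int \varphi^2 v(H-\partial_t v)$ as in \eqref{in caccio1} — yields
\begin{equation*}
\int_{Q_{2r}\cap V}\varphi^2|\nabla v|^2 \le C\left(r^{-2}\|v\|_{L^2(Q_{2r}\cap V)}^2 + \|H\|_{L^2(H^{-1}(B_{2r}\cap U);I_{2r}\cap I)}^2\right),
\end{equation*}
and then $\varphi\equiv1$ on $Q_r$ gives \eqref{global cacci}. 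The constant now depends on $V$ through the Poincaré-type inequality one uses on $B_{2r}\cap U$ and through the (finitely many) scales at which the geometry of $\partial U$ enters, hence the stated dependence $C(V,d,\lambda)$.

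The main obstacle, and the only place requiring genuine care beyond copying Lemma \ref{Interior Caccioppoli inequality}, is the boundary term handling: one must verify that $\varphi^2 v\in L^2(H^1_0(B_{2r}\cap U);I)$ is a legitimate test function even though $\varphi$ does not vanish on $\partial U\times I$ — this is exactly where $v=0$ on $\partial U\times I$ is used — and that the Poincaré inequality $\|\varphi^2 v\|_{L^2(H^1)}\le C\|\nabla(\varphi^2 v)\|_{L^2}$ still holds on the mixed region $B_{2r}\cap U$ with a constant controlled by the Lipschitz character of $U$; the boundary portion of $\partial(B_{2r}\cap U)$ on which $\varphi^2 v$ vanishes is $\partial U\cap B_{2r}$, which is nonempty precisely in the boundary case, so Poincaré is available. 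Once this is in place, the treatment of $\int \varphi^2 v(H-\partial_t v)$ proceeds exactly as in \eqref{in caccio1}: the time-boundary term $-\tfrac12\int_{B_{2r}\cap U}\varphi^2 v^2$ evaluated at the top of the cylinder has a favorable sign (or vanishes, if $\varphi$ is chosen to vanish past $I_+$), and the $H$-pairing is absorbed by Young's inequality after the Poincaré step, just as in \eqref{3.7}. Combining the interior and boundary cases over a covering of $Q_r\cap V$ by such cylinders — finitely many at each scale, with overlap bounded independently of $r$ — completes the proof of \eqref{global cacci}.
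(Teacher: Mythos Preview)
Your proposal is correct and matches the paper's approach: the paper omits the proof, simply calling it a straightforward adaptation of Lemma~\ref{Interior Caccioppoli inequality} (with a pointer to \cite[Lemma~B.6]{armstrong2018quantitative}), and your argument---testing against $\varphi^2 v$ and using $v=0$ on $\partial_\sqcup V$ to legitimize the test function and the boundary Poincar\'e inequality---is precisely that adaptation. The closing covering step is superfluous (a single cutoff on $Q_{2r}$ already handles the given $Q_r$, whether or not it meets $\partial_\sqcup V$), but this is harmless.
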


\begin{proof}
   The argument is omitted as it is a straightforward adaptation of Lemma \ref{Interior Caccioppoli inequality}. For more details, refer to \cite[Lemma B.6]{armstrong2018quantitative}.
\end{proof}

\begin{lemma}[Global Meyers estimate]\label{meyer}
Let $V$ be as in Theorem \ref{qualitative result}. Fix $l\ge2$ and suppose $f\in W^{1,l}_{\mathrm{par}}(V)$. Let $h\in L^{l}(W^{-1,l}(U);I)$ and suppose that $ p\in f+H^{1}_{\mathrm{par},\sqcup}(V) $ is the unique  weak solution to the Cauchy-Dirichlet problem:
	\begin{equation}\label{reverse}
		\begin{cases}
			\partial_{t}p-\nabla\cdot(\mathbf{a}\nabla p)-\mathbf{b}\nabla p-\mathbf{d} p{+\Lambda p}=h& \text{in $V$},\\
			p=f& \text{on $\partial_{\sqcup}V$}.
		\end{cases}
	\end{equation}
	Then there exist $\delta(V,d,\lambda)\in(0,l-2]$ and $C(V,d,\lambda)<\infty$ such that $p\in W^{1,2+\delta}_{\mathrm{par}}(V)$ and 	
	\begin{equation}\label{meyer1}
		\Vert\nabla p\Vert_{L^{2+\delta}(V)}\le C\left(\Vert f\Vert_{W^{1,2+\delta}_{\mathrm{par}}(V)}+\Vert h\Vert_{L^{2+\delta}\left(W^{-1,2+\delta}(U);I\right)}\right).
	\end{equation}

\end{lemma}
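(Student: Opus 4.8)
The plan is to obtain the higher integrability of $\nabla p$ from a reverse Hölder inequality combined with the Gehring / Giaquinta–Struwe lemma in the parabolic setting. First I would reduce to the case $f\equiv 0$: setting $\tilde p := p - f$, the function $\tilde p\in H^1_{\mathrm{par},\sqcup}(V)$ solves the same type of equation with right-hand side $\tilde h := h + \nabla\cdot(\mathbf{a}\nabla f) + \mathbf{b}\nabla f + \mathbf{d} f - \Lambda f - \partial_t f$, which lies in $L^{l}(W^{-1,l}(U);I)$ with norm controlled by $\Vert f\Vert_{W^{1,l}_{\mathrm{par}}(V)} + \Vert h\Vert_{L^{l}(W^{-1,l}(U);I)}$; here the zeroth-order terms $\mathbf{b}\nabla f$, $(\mathbf{d}-\Lambda)f$ are bounded by $\Lambda$ times $L^l$-norms of $f$ and hence absorbed. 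So it suffices to prove the estimate for $\tilde p$ with zero boundary data and the modified source.

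Next I would establish the two building-block estimates on parabolic cylinders $Q_\rho(z)$ (intrinsically rescaled so that the $t$-side has length $\rho^2$), distinguishing interior cylinders from those meeting $\partial_\sqcup V$. The interior estimate is exactly the Caccioppoli inequality of Lemma \ref{Interior Caccioppoli inequality}, and near the parabolic boundary the global version, Lemma \ref{global Caccioppoli inequality} (for the $f=0$ problem), plays the same role — this is why the reduction to zero boundary data is convenient. Feeding Caccioppoli into the parabolic Sobolev–Poincaré inequality (using $\sup_t \Vert p\Vert_{L^2(B_\rho)}$ from \eqref{rhs} together with $\Vert\nabla p\Vert_{L^2}$ to control the mixed-norm and thus interpolate to an $L^{2q}$-bound with $q<1$, i.e. a parabolic Gagliardo–Nirenberg inequality) yields a reverse Hölder inequality of the form
\begin{equation*}
\left(\fint_{Q_\rho}|\nabla \tilde p|^{2}\right)^{1/2}
\le C\left(\fint_{Q_{2\rho}}|\nabla \tilde p|^{2q}\right)^{1/(2q)}
+ C\left(\fint_{Q_{2\rho}}|F|^{2}\right)^{1/2},
\end{equation*}
for some $q\in(0,1)$, where $F$ encodes the $W^{-1,l}$-datum (more precisely one writes $\tilde h = \nabla\cdot F_0 + F_1$ and carries both pieces; the $F_1$ piece is handled via duality as in the proof of Lemma \ref{Interior Caccioppoli inequality}). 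Then I would invoke Gehring's lemma in its parabolic form (as in Giaquinta–Struwe, or \cite[Appendix]{armstrong2018quantitative}): a function satisfying such a reverse Hölder inequality is automatically in $L^{2+\delta}_{\mathrm{loc}}$ for some $\delta>0$ depending only on $d,\lambda$ and the reverse-Hölder constants, with the quantitative bound \eqref{meyer1}; since $F\in L^l$ with $l\ge 2+\delta$ (shrinking $\delta$ if necessary so $\delta\le l-2$), the self-improvement does not overshoot the integrability of the data.

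The main obstacle is twofold. First, getting the reverse Hölder inequality with exponent $2q<2$ on the right requires the parabolic Sobolev–Poincaré inequality in the correct intrinsically-scaled cylinders, and one must be careful that the lower-order terms $\mathbf{b}\nabla p$ and $(\mathbf{d}-\Lambda)p$ do not spoil the scaling: the coefficient $\Lambda$ is dimensionful, so on a cylinder of radius $\rho$ the term $\Lambda\Vert\varphi p\Vert_{L^2}^2$ is of order $\rho^2$ relative to the gradient term, which is harmless for $\rho\lesssim 1$ but must be tracked — this is precisely the role of the artificial $+\Lambda p$ term, which makes the zeroth-order contribution have a favorable sign (cf. \eqref{b lambda}), so no smallness of $\rho$ is even needed. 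Second, handling the cylinders that straddle the lateral and initial parts of $\partial_\sqcup V$ uniformly requires the Lipschitz character of $U$ and a standard covering/reflection argument; this is where the constant's dependence on $V$ enters. Once these are in place, Gehring's lemma is a black box and \eqref{meyer1} follows by tracking the norms through the reduction to zero boundary data.
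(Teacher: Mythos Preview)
Your proposal is correct and follows essentially the same route as the paper: reduce to zero boundary data by subtracting $f$, derive a reverse H\"older inequality from the Caccioppoli estimates (Lemmas~\ref{Interior Caccioppoli inequality} and~\ref{global Caccioppoli inequality}), and then apply a Gehring-type lemma to self-improve to $L^{2+\delta}$. The paper states the reverse H\"older step by citing \cite[Lemma~B.7]{armstrong2018quantitative} and the Gehring step via \cite[Proposition~5.1]{modica1979regularity}, while you spell out a bit more of the mechanism (parabolic Sobolev--Poincar\'e, boundary cylinders, the sign role of the $+\Lambda p$ term), but the argument is the same.
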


\begin{proof}
	We can assume that $f=0$ without loss of generality. Indeed, taking
	\begin{equation}\label{H}
		v=p-f,\  H=h+\nabla\cdot(\mathbf{a}\nabla f)+\mathbf{b}\nabla f+\mathbf{d} f{-\Lambda f}-\partial_{t}f,
	\end{equation}
we see that $v$ is a solution to
\begin{equation}\label{reversee}
		\begin{cases}
			\partial_{t}v-\nabla\cdot(\mathbf{a}\nabla v)-\mathbf{b}\nabla v-\mathbf{d} v{+\Lambda v}=H& \text{in $V$},\\
			v=0& \text{on $\partial_{\sqcup}V$}.
		\end{cases}
	\end{equation}
	
By Lemma \ref{Interior Caccioppoli inequality}, we can get the reverse H$\mathrm{\ddot{o}}$lder inequality \cite[Lemma B.7]{armstrong2018quantitative} for (\ref{reversee}), which states that for $1<m<2$ and $k>0$, we have
	\begin{align}\label{reverse holder}
		&\Vert\nabla v\Vert_{{L}^{2}(Q_{r}\cap V)}\nonumber\\
  \le&\dfrac{C}{k}\Vert\nabla v\Vert_{{L}^{m}(Q_{4r}\cap V)}+k\Vert\nabla v\Vert_{{L}^{2}(Q_{4r})}+C\Vert H\Vert_{{L}^{2}({H}^{-1}(B_{4r}\cap U); I_{4r}\cap I)}.
	\end{align}
By (\ref{reverse holder}) and a Gehring-type lemma \cite[ Proposition 5.1]{modica1979regularity}, we have
	\begin{equation}
		\Vert\nabla v\Vert_{{L}^{2+\delta}(Q_{r}\cap V)}\le C\left(\Vert\nabla v\Vert_{{L}^{2}(Q_{4r}\cap V)}
		+\Vert H\Vert_{{L}^{2+\delta}( W^{-1,2+\delta}(B_{4r}\cap U); I_{4r}\cap I)}\right).
	\end{equation}
Let $r$ be large enough, then
	\begin{equation}\label{lemma}
		\Vert\nabla v\Vert_{{L}^{2+\delta}( V)}\le C\left(\Vert\nabla v\Vert_{{L}^{2}( V)}
		+\Vert H\Vert_{{L}^{2+\delta}( W^{-1,2+\delta}( U); I)}\right).
	\end{equation}
By using (\ref{global cacci}) and letting $r$ be large enough, we get
  \begin{align}\label{me11}
	\Vert\nabla v\Vert_{{L}^{2}( V)}
		\le C\Vert H\Vert_{{L}^{2}(H^{-1}( U); I)}.
	\end{align}
    Note that $p=v+f$ and combine (\ref{lemma}) with (\ref{me11}), then we have
	\begin{align}\label{me111}
	\Vert\nabla p\Vert_{{L}^{2+\delta}(V)}&\le \Vert\nabla v\Vert_{{L}^{2+\delta}(V)}+\Vert\nabla f\Vert_{{L}^{2+\delta}(V)}\nonumber\\
		&\le C\left(\Vert H\Vert_{{L}^{2}( H^{-1}(U); I)}+\Vert H\Vert_{{L}^{2+\delta}( W^{-1,2+\delta}( U); I)}\right)+\Vert\nabla f\Vert_{{L}^{2+\delta}( V)}\nonumber\\
		&\le C\Vert H\Vert_{{L}^{2+\delta}( W^{-1,2+\delta}( U); I)}+\Vert\nabla f\Vert_{{L}^{2+\delta}( V)}.
	\end{align}
Applying (\ref{H}) and (\ref{def w}), we obtain
\begin{align}\label{me2}
	&\Vert H\Vert_{{L}^{2+\delta}( W^{-1,2+\delta}( U);I)}+\Vert\nabla f\Vert_{{L}^{2+\delta}( V)}\nonumber\\
		\le &C(\|h\|_{L^{2+\delta}\left(W^{-1,2+\delta}(U); I\right)}+\|f\|_{L^{2+\delta}\left(W^{1,2+\delta}(U); I\right)})\nonumber\\
  &+C(\|\partial_{t}f\|_{L^{2+\delta}\left(W^{-1,2+\delta}(U); I\right)}+\Vert\nabla f\Vert_{{L}^{2+\delta}( V)} )\nonumber\\
		\le &C(\Vert h\Vert_{L^{2+\delta}\left(W^{-1,2+\delta}(U); I\right)}+\Vert f\Vert_{W^{1,2+\delta}_{\mathrm{par}}(V)} ).
	\end{align}
Combining (\ref{me111}) with (\ref{me2}) yields (\ref{meyer1}).
 \end{proof}

\subsection{Two-scale expansion}
 The two-scale expansion is one of the most crucial techniques for studying homogenization, and more details on this method can be found in the work of Bensoussan, Lions, and Papanicolaou  \cite{bensoussan2011asymptotic}. In this study, we utilize this method to conduct homogenization for the random case with lower-order terms. We start with a hypothesis: the solution $ p^\epsilon $  of (\ref{q1}) is a series  {as follows}:
	\begin{align}\label{expansion}
		p^\epsilon(x,t)=\sum_{i=0}^{\infty}\epsilon^{i}p_{i}\left(x,\dfrac{x}{\epsilon},t,\dfrac{t}{\epsilon^{2}}\right).
	\end{align}
Substituting (\ref{expansion}) to (\ref{q1}) and setting $ y=x/\epsilon$ and $ s=t/\epsilon^{2} $, we obtain
\begin{align*}
	0=&\nabla\cdot(\mathbf{a}^\epsilon\nabla p^\epsilon)+\mathbf b^\epsilon \nabla p^\epsilon+\mathbf{d}^\epsilon p^\epsilon-\partial_{t}p^\epsilon\\
		=&\dfrac{1}{\epsilon^2}\nabla_{y}\cdot\left(\mathbf{a}^\epsilon\nabla_{y}p_{0}\right)
		+\dfrac{1}{\epsilon}\nabla_{x}\cdot\left(\mathbf{a}^\epsilon\nabla_{y}p_{0}\right)+\dfrac{1}{\epsilon}\nabla_{y}\cdot\left(\mathbf{a}^\epsilon\left(\nabla_{x}p_{0}+\nabla_{y}p_{1}\right)\right)\\
		&+\nabla_{x}\cdot\left(\mathbf{a}^\epsilon\left(\nabla_{x}p_{0}+\nabla_{y}p_{1}\right)\right)+\dfrac{1}{\epsilon} \mathbf{b}^\epsilon\nabla_{y}p_{0}
		+\mathbf{b}^\epsilon\nabla_{x}p_{0}+\mathbf{b}^\epsilon\nabla_{y}p_{1}\\
		&+\mathbf{d}^\epsilon p_{0} -
		\partial_{t}p_{0}
		-\dfrac{1}{\epsilon^{2}}\partial_{s}p_{0}
		-\dfrac{1}{\epsilon}\partial_{s}p_{1}
		-\partial_{s}p_{2}-\cdot\cdot\cdot.
\end{align*}
By determining powers of $\epsilon$ in the expansion and letting $\epsilon \to 0$, we get $p_{0}\left(x,y,t,s\right)=p_{0}(x,t)$ and
\begin{align}\label{not dep}
	\nabla_{y}\cdot\left(\mathbf{a}^\epsilon\nabla_{y}p_{1}\right)-\partial_{s}p_{1}=-\sum_{i=1}^{d}\nabla_{y}\mathbf{a}^\epsilon e_{i}\partial_{x_{i}}p_{0}(x,t).
\end{align}
Combining (\ref{not dep}) with (\ref{corrector}), we get
\begin{align*}
	-\partial_{s}p_{1}+\nabla_{y}\cdot\left(\mathbf{a}^\epsilon\nabla_{y}p_{1}\right)
	=\sum_{i=1}^{d}\nabla_{y}\left(\mathbf{a}^\epsilon\nabla_{y}\phi_{e_{i}}(y,s)\right)\partial_{x_{i}}p_{0}(x,t)-\sum_{i=1}^{d}\partial_{s}\phi_{e_{i}}(y,s)\partial_{x_{i}}p_{0}(x,t).
\end{align*}
Therefore, we can identify $ p_{1} $ by $\phi_{e}$ and $ p_{0} $, that is
\begin{equation}\label{p1}
	p_{1}=\sum_{i=1}^{d} \partial_{x_{i}} p_{0} (x,t)\phi_{e_{i}}(y,s).
\end{equation}
  {For simplicity}, we denote, for $0 < \epsilon\ll1$,
\begin{equation}
	\phi_e^{\epsilon}(x,t):=\phi_e\left( \dfrac{x}{\epsilon},\dfrac{t}{\epsilon^2}\right)=\phi_{e}(y,s).
\end{equation}
In our setting, we demonstrate that the first-order corrector $\phi$ does not depend on the lower-order terms, thereby supporting the definition of the first-order corrector as given in (\ref{corrector}). However, it is essential to acknowledge that the definition of the first-order corrector may vary depending on the specific form of the rescaled lower-order terms.


\subsection{Convergence estimates}
\begin{lemma} \label{convergence L to H}\cite[Exercise 1.9]{armstrong2019quantitative}
 Assume that the sequence $ \{f_{m}\}_{m\in\mathbb{N}}\subseteq L^{2}(V) $ is uniformly bounded. Then the following statements are equivalent:
\begin{enumerate}
	\item $f_{m}\to f$ weakly in $ L^{2}(V) $, as $m\to \infty$;
	\item $\Vert f_{m}-f\Vert_{{\tilde{H}}_{\mathrm{par}}^{-1}(V)}\to0$, as $m\to \infty$.
\end{enumerate}

\end{lemma}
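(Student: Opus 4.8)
The plan is to deduce the equivalence from a single functional-analytic fact: the canonical map $\iota\colon L^2(V)\to\tilde{H}^{-1}_{\mathrm{par}}(V)$ sending $f$ to the functional $g\mapsto\int_V fg$ is a \emph{compact} linear operator. First I would record that $\iota$ is well defined and $1$-Lipschitz, since $\|g\|_{L^2(V)}\le\|g\|_{W^{1,2}_{\mathrm{par}}(V)}$ for every $g\in W^{1,2}_{\mathrm{par}}(V)$, whence $\|f\|_{\tilde{H}^{-1}_{\mathrm{par}}(V)}\le\|f\|_{L^2(V)}$. Next, after the Riesz identification $L^2(V)^\ast\cong L^2(V)$, the operator $\iota$ is precisely the Banach-space adjoint of the inclusion $j\colon W^{1,2}_{\mathrm{par}}(V)\hookrightarrow L^2(V)$: for $\ell\in L^2(V)^\ast$ represented by $f$ one has $(j^\ast\ell)(g)=\ell(g)=\int_V fg$. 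The inclusion $j$ is compact by the Aubin--Lions--Simon lemma applied to $H^1(U)\hookrightarrow L^2(U)\hookrightarrow H^{-1}(U)$, the first embedding being compact because $U$ is a bounded Lipschitz domain and the needed control $\partial_t g\in L^2(H^{-1}(U);I)\subseteq L^1(H^{-1}(U);I)$ being available since $I$ is bounded; Schauder's theorem then gives that $j^\ast=\iota$ is compact. This compactness is the only substantive input, so verifying it (or pinning down the exact statement to cite) is the step I expect to be the main obstacle; the rest is soft.

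For (1)$\Rightarrow$(2): set $g_m:=f_m-f$, so $g_m\rightharpoonup 0$ in $L^2(V)$ and $\sup_m\|g_m\|_{L^2(V)}<\infty$. A compact operator carries weakly null bounded sequences to norm-null sequences: continuity gives $\iota g_m\rightharpoonup 0$ in $\tilde{H}^{-1}_{\mathrm{par}}(V)$, boundedness together with compactness makes $\{\iota g_m\}$ relatively norm-compact, and any norm-convergent subsequence must have limit $0$; hence $\|f_m-f\|_{\tilde{H}^{-1}_{\mathrm{par}}(V)}=\|\iota g_m\|_{\tilde{H}^{-1}_{\mathrm{par}}(V)}\to 0$.

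For (2)$\Rightarrow$(1): let $M:=\sup_m\|f_m\|_{L^2(V)}+\|f\|_{L^2(V)}<\infty$, fix $\varphi\in L^2(V)$ and $\eta>0$, and choose $\psi\in C_c^\infty(V)\subseteq W^{1,2}_{\mathrm{par}}(V)$ with $\|\varphi-\psi\|_{L^2(V)}\le\eta$. Then
\[
\left|\int_V(f_m-f)\varphi\right|\le\left|\int_V(f_m-f)\psi\right|+M\eta\le\|\psi\|_{W^{1,2}_{\mathrm{par}}(V)}\,\|f_m-f\|_{\tilde{H}^{-1}_{\mathrm{par}}(V)}+M\eta,
\]
so $\limsup_{m\to\infty}\bigl|\int_V(f_m-f)\varphi\bigr|\le M\eta$; letting $\eta\to 0$ shows $\int_V f_m\varphi\to\int_V f\varphi$, and since $\varphi\in L^2(V)$ was arbitrary this is exactly $f_m\rightharpoonup f$ in $L^2(V)$. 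An alternative for this direction is a subsequence argument: by boundedness every subsequence of $\{f_m\}$ has a further subsequence converging weakly in $L^2(V)$, whose limit must agree with $f$ by the already-established implication (1)$\Rightarrow$(2) and uniqueness of limits in $\tilde{H}^{-1}_{\mathrm{par}}(V)$; the subsequence principle then gives weak convergence of the whole sequence. This reasoning matches the cited exercise \cite[Exercise 1.9]{armstrong2019quantitative}.
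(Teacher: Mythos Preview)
Your argument is correct, and at bottom both proofs rest on the same fact---the compact embedding $W^{1,2}_{\mathrm{par}}(V)\hookrightarrow L^2(V)$ (Aubin--Lions, cited in the paper as Rellich)---but you package it differently. For (1)$\Rightarrow$(2) the paper works by hand: it realizes the dual norm as a maximum over the unit ball of $H^1_{\mathrm{par}}(V)$, picks a maximizer $g_m$ for each $m$, extracts an $L^2$-convergent subsequence by compactness, and then pairs a weakly null sequence with a strongly convergent one. You instead observe once that $\iota=j^\ast$ is compact by Schauder and then invoke the standard fact that compact operators send weakly null bounded sequences to norm-null ones. Your route is cleaner and immediately transferable to any compact embedding, while the paper's is more concrete but requires checking that the supremum is actually attained. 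For (2)$\Rightarrow$(1) the two arguments are essentially identical density-plus-splitting estimates; your version with $C_c^\infty$ is marginally tidier than the paper's choice of approximant in $H^1_{\mathrm{par}}$.
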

\begin{proof}
(1) $\Rightarrow$ (2).
Let us begin with the definition of ${\tilde{H}}_{\mathrm{par}}^{-1}$ norm and we get

\begin{align}\label{exer}
	\Vert f_{m}-f\Vert _{{\tilde{H}}_{\mathrm{par}}^{-1}(V)}
	&=\sup\left\{
	\int_{V} 
	(f_{m}-f)g:g\in H_{\mathrm{par}}^{1}(V),\Vert g\Vert _{{H}_{\mathrm{par}}^{1}(V)}\leq1\right\}\nonumber\\
	&= \max \left\{
	\int_{V} 
	(f_{m}-f)g:g\in H_{\mathrm{par}}^{1}(V),\Vert g\Vert _{{H}_{\mathrm{par}}^{1}(V)}\leq1\right\}\nonumber\\
        &=\int_{V} (f_{m}-f)g_{m}.
\end{align}
Here $ g_m $ is an element that attains the maximum in the second line.   
Using Rellich's compactness theorem \cite[Theorem 5]{Evans1990}, we have a convergent subsequence in $L^2(V)$, denoted by the same symbol $\{g_m \}$, with a limit $ g $ in $L^2 (V)$. 
Taking $m \to \infty $ in (\ref{exer}), we obtain (2).

(2) $\Rightarrow$ (1).
For all $ g\in L^{2}(V)$ and a function $h\in H^{1}_{\mathrm{par}}(V)$ satisfying $\Vert h\Vert_{{H}_{\mathrm{par}}^{1}(V)}\leq {C} $, by the Cauchy-Schwarz inequality, 
we have
\begin{align}\label{f-f}
	\left|
	\int_{V} 
	(f_{m}-f)g\right|
 \leq&\left|
	\int_{V} 
	(f_{m}-f)(g-h)\right|+\left|
	\int_{V} 
	(f_{m}-f)h\right|\nonumber\\
	\leq& \Vert f_{m}-f\Vert _{{{L}}^{2}(V)}\Vert g-h\Vert _{{{L}}^{2}(V)}+C\Vert f_{m}-f\Vert _{{\tilde{H}}_{\mathrm{par}}^{-1}(V)}\nonumber\\
	\leq& \underset{m\in\mathbb{N}}{\sup}\left(\Vert f_{m}\Vert _{{{L}}^{2}(V)}+\Vert f\Vert_{{{L}}^{2}(V)}\right)\underset{\Vert h\Vert _{{H}_{\mathrm{par}}^{1}(V)}\leq {C}}{\inf}\Vert g-h\Vert _{{{L}}^{2}(V)}\nonumber\\
 &+ C\Vert f_{m}-f\Vert _{{\tilde{H}}_{\mathrm{par}}^{-1}(V)}.
\end{align}
By selecting a proper $C$ such that the right-hand side of (\ref{f-f}) is small enough and using (2), we get (1).
\end{proof}

\begin{proposition}\label{convergence a,b}

	The homogenized coefficients $\bar{\mathbf{a}}$, $\bar{\mathbf{b}}$, and $\bar{\mathbf{d}}$ are defined in Definition \ref{homogenized a} and the first-order corrector $\phi_{e}$ is defined in Definition \ref{first order}. Note that $\epsilon\nabla\phi_{e_{i}}^{\epsilon}=\nabla \phi_{e_{i}}\left( x/{\epsilon},t/{\epsilon^{2}} \right)$. We have the following convergence properties, as $\epsilon\to 0$:
  \begin{equation}\label{weak conv}
		\begin{cases}
           \epsilon\nabla\phi_{e_{i}}^{\epsilon} \to 0&\quad weakly\ in\ L^2(\square_{0}),\\
           \mathbf{a}^\epsilon \left( e_{i}+\epsilon\nabla\phi_{e_{i}}^{\epsilon}\right) \to\bar{\mathbf{a}}e_{i}& \quad weakly\ in\ L^2(\square_{0}),\\	
			\mathbf{b}^\epsilon \left( e_{i}+\epsilon\nabla\phi_{e_{i}}^{\epsilon}\right) \to\bar{\mathbf{b}}e_{i}& \quad weakly\ in\ L^2(\square_{0}),\\
			\mathbf{d}^\epsilon \to\bar{\mathbf{d}}& \quad weakly\ in\ L^2(\square_{0}).
		\end{cases}
	\end{equation}
 
\end{proposition}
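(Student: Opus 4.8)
The plan is to reduce the four convergences in (\ref{weak conv}) to a single statement about rescaled stationary, square-integrable random fields, and then apply the ergodic theorem.

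First I would use the identity recorded just above the statement: since $\phi_{e_i}^\epsilon(x,t)=\phi_{e_i}(x/\epsilon,t/\epsilon^2)$, we have $\epsilon\nabla\phi_{e_i}^\epsilon(x,t)=(\nabla\phi_{e_i})(x/\epsilon,t/\epsilon^2)$, so each of the four quantities in (\ref{weak conv}) has the form $G(x/\epsilon,t/\epsilon^2)$ with $G$ one of the fields
\[
\nabla\phi_{e_i},\qquad \mathbf{a}(e_i+\nabla\phi_{e_i}),\qquad \mathbf{b}(e_i+\nabla\phi_{e_i}),\qquad \mathbf{d}.
\]
Each of these is stationary, by Definition \ref{first order}(2) together with the stationarity of $(\mathbf{a},\mathbf{b},\mathbf{d})$, and square-integrable in the sense $\mathbb{E}[|G|^2]<\infty$: indeed, by (\ref{elli unifor}) and (\ref{boundedness}) one has $|\mathbf{a}(e_i+\nabla\phi_{e_i})|\le\lambda(1+|\nabla\phi_{e_i}|)$ and $|\mathbf{b}(e_i+\nabla\phi_{e_i})|\le\Lambda^{1/2}(1+|\nabla\phi_{e_i}|)$, $\mathbf{d}$ is bounded by (\ref{boundedness1}), and $\mathbb{E}[|\nabla\phi_{e_i}|^2]<\infty$ by (\ref{corrector 3}). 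Moreover, by Fubini's theorem and stationarity, $\mathbb{E}[\int_{\square_0}G]$ equals the (spatially constant) expected value of $G$; this value is $0$ for $G=\nabla\phi_{e_i}$ by (\ref{corrector 3}), and is $\bar{\mathbf{a}}e_i$, $\bar{\mathbf{b}}e_i$, $\bar{\mathbf{d}}$, respectively, for the other three fields by Definition \ref{homogenized a}.

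The core of the proof is then the claim: if $G$ is a stationary field on $\mathbb{R}^d\times\mathbb{R}$ with $\mathbb{E}[|G|^2]<\infty$, then $\mathbb{P}$-almost surely $G(\cdot/\epsilon,\cdot/\epsilon^2)\to\mathbb{E}[G]$ weakly in $L^2(\square_0)$ as $\epsilon\to0$. I would prove this in three steps. (i) A change of variables identifies $\Vert G(\cdot/\epsilon,\cdot/\epsilon^2)\Vert_{L^2(\square_0)}^2$ with the average of $|G|^2$ over the parabolic box $R_\epsilon:=(-\tfrac{1}{2\epsilon},\tfrac{1}{2\epsilon})^d\times(-\tfrac{1}{2\epsilon^2},\tfrac{1}{2\epsilon^2})$; since $|G|^2\in L^1$, the multiparameter ergodic theorem (see e.g. \cite{jikov2012homogenization}) shows this average converges a.s. to $\mathbb{E}[|G|^2]$, so $\{G(\cdot/\epsilon,\cdot/\epsilon^2)\}_\epsilon$ is bounded in $L^2(\square_0)$. (ii) The same change of variables and ergodic theorem give, for each fixed cube $Q\subseteq\square_0$, that $\int_Q G(\cdot/\epsilon,\cdot/\epsilon^2)\to|Q|\,\mathbb{E}[G]$ a.s.; taking the intersection over the countably many dyadic cubes with rational vertices yields a full-measure event on which this holds for all such $Q$. (iii) By (i) the family is weakly precompact in $L^2(\square_0)$, and by (ii) every weak limit point has integral $|Q|\,\mathbb{E}[G]$ over each dyadic cube $Q$, hence equals the constant $\mathbb{E}[G]$ a.e. by Lebesgue differentiation; uniqueness of the limit point then gives weak convergence of the whole family. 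Applying the claim to the four fields listed above and inserting the constants found in the previous paragraph yields (\ref{weak conv}).

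The one genuinely delicate point is the ergodic theorem used in (i)--(ii): the parabolic rescaling makes the averaging boxes $R_\epsilon$ have spatial and temporal sidelengths of order $1/\epsilon$ and $1/\epsilon^2$, so their eccentricity is unbounded, and one needs a version of the multiparameter ergodic theorem valid for such unrestricted box limits — available here because the integrands lie in $L^1$ (indeed $L^2$). Equivalently, one may quote the ``mean-value'' (weak $L^2$) ergodic theorem for stationary fields in the form used in the stochastic-homogenization literature \cite{armstrong2018quantitative,armstrong2019quantitative}, observing that assumption \textbf{A2} makes the conditional expectation there collapse to $\mathbb{E}[G]$. The remaining ingredients — Fubini's theorem, the pointwise bounds from (\ref{elli unifor})--(\ref{boundedness1}), and the management of null sets — are routine.
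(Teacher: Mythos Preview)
Your proof is correct and follows essentially the same approach as the paper: identify each quantity in (\ref{weak conv}) as a rescaled stationary $L^2$ field and invoke the ergodic theorem to obtain weak $L^2$ convergence to the expectation, which by (\ref{corrector 3}) and Definition \ref{homogenized a} equals the stated limit. The only difference is presentational: the paper cites a packaged weak-convergence ergodic theorem \cite[Lemma 2.23]{Stefan2018} as a black box, whereas you unpack it via the three-step boundedness/dyadic-cube/compactness argument and are more explicit than the paper about the anisotropic-scaling subtlety in the averaging boxes.
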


\begin{proof}
By (\ref{corrector 3}) and the ergodic theorem \cite[Lemma 2.23]{Stefan2018}, we have, for all $ g(x,t)\in L^{2}(\square_{0})$, 
\begin{equation*}
\int_{\square_{0}} \nabla\phi_{e_{i}}\left( \dfrac{x}{\epsilon},\dfrac{t}{\epsilon^{2}} \right)g(x,t)\mathrm{d}x\mathrm{d}t\to \mathbb{E}\left[ \nabla\phi_{e_{i}}\left( \dfrac{x}{\epsilon},\dfrac{t}{\epsilon^{2}} \right) \right]\int_{\square_{0}} g(x,t)\mathrm{d}x\mathrm{d}t=0,\quad \mathrm{as}\ \epsilon \to 0.
\end{equation*}
By using (\ref{elli unifor}), (\ref{boundedness}) and (\ref{corrector 3}), we can use the ergodic theorem again to get, for all $ g(x,t)\in L^{2}(\square_{0})$, as $\epsilon \to 0$,
\begin{equation*}
		\left\{
		\begin{aligned}
			&  \int_{\square_{0}} \mathbf{a}^\epsilon \left( e_{i}+\epsilon\nabla\phi_{e_{i}}^{\epsilon}\right)g(x,t)\mathrm{d}x\mathrm{d}t\to \mathbb{E}\left[\int_{\square_{0}} \mathbf{a}^\epsilon \left( e_{i}+\epsilon\nabla\phi_{e_{i}}^{\epsilon}\right)\right]\int_{\square_{0}} g(x,t)\mathrm{d}x\mathrm{d}t,\\
			&
			 \int_{\square_{0}} \mathbf{b}^\epsilon \left( e_{i}+\epsilon\nabla\phi_{e_{i}}^{\epsilon}\right) g(x,t)\mathrm{d}x\mathrm{d}t\to \mathbb{E}\left[\int_{\square_{0}} \mathbf{b}^\epsilon \left( e_{i}+\epsilon\nabla\phi_{e_{i}}^{\epsilon}\right) \right]\int_{\square_{0}} g(x,t)\mathrm{d}x\mathrm{d}t,\\
			&
			 \int_{\square_{0}} \mathbf{d}^\epsilon g(x,t)\mathrm{d}x\mathrm{d}t\to \mathbb{E}\left[\int_{\square_{0}} \mathbf{d}^\epsilon \right]\int_{\square_{0}} g(x,t)\mathrm{d}x\mathrm{d}t.
		\end{aligned}
		\right.
	\end{equation*}
 Recalling the Definition \ref{homogenized a}, we have proved (\ref{weak conv}).
\end{proof}


\section{Proof for qualitative homogenization}

\begin{theorem}\label{quali0}
 Let $\Lambda$ be as in Lemma \ref{Interior Caccioppoli inequality} and let $V$ be as in Theorem \ref{qualitative result}.  Fix $f\in W^{1,2+\delta}_{\mathrm{par}}(V)$ with $\delta>0$. Let  $\hat{p}^\epsilon:=\exp{(-\Lambda t)}p^\epsilon $ and $\hat{p}_0:=\exp{(-\Lambda t)}p_0 $  be weak solutions of 
	\begin{equation}\label{hat q1}
		\begin{cases}
			\nabla\cdot(\mathbf{a}^\epsilon\nabla \hat{p}^\epsilon)+\mathbf b^\epsilon \nabla \hat{p}^\epsilon +\mathbf d^\epsilon \hat{p}^\epsilon - \Lambda \hat{p}^\epsilon=\partial_{t}\hat{p}^\epsilon&\quad in\  V,\\
			\hat{p}^\epsilon=f& \quad on\  \partial_\sqcup V,
		\end{cases}
	\end{equation}
	and
	\begin{equation}\label{hat q2}
		\begin{cases}
			\nabla\cdot(\mathbf{\bar{a}}\nabla  \hat{p}_0)+\mathbf{\bar{b}}\nabla  \hat{p}_0+\mathbf{\bar{d}} \hat{p}_0-\Lambda \hat{p}_0=\partial_{t} \hat{p}_0& \quad in\  V,\\
			\hat{p}_0=f&\quad on\  \partial_\sqcup V,
		\end{cases}
	\end{equation}
 respectively.
 Then there exist an exponent $\beta(\delta,d,\lambda)>0$ and a constant $C(\delta,d,\lambda,V)>0$ such that for every $r\in(0,1)$, we have
	\begin{align}\label{result 1}
		\Vert \hat{p}^\epsilon- \hat{p}_0\Vert_{L^{2}(V)}+\Vert\nabla \hat{p}^\epsilon-\nabla \hat{p}_0\Vert_{{\tilde{H}}^{-1}_{\mathrm{par}}(V)}
		\le C\Vert f\Vert_{W^{1,2+\delta}_{\mathrm{par}}(V)}\left( r^{\beta}+\dfrac{1}{r^{4+d/2}}\mathcal{E}(\epsilon)\right),
	\end{align}
where ${\tilde{H}}^{-1}_{\mathrm{par}}(V)={\tilde{W}}^{-1,2}_{\mathrm{par}}(V)$ and
\begin{align}\label{da E}
	\mathcal{E}(\epsilon):=\sum_{i=1}^{d} & \left( \epsilon\left\lVert\phi_{e_{i}}^{\epsilon} \right\lVert_{{L}^{2}(\square_{0})}+\left\lVert\epsilon\nabla\phi_{e_{i}}^{\epsilon} \right\lVert_{{\tilde{H}}_{\mathrm{par}}^{-1}(\square_{0})}+\left\lVert\mathbf{a}^{\epsilon}\left(e_{i}+\epsilon\nabla\phi_{e_{i}}^{\epsilon}\right)-\bar{\mathbf{a}}e_{i} \right\lVert_{{\tilde{H}}_{\mathrm{par}}^{-1}(\square_{0})}\right.\nonumber\\
		&+\left.\left\lVert\mathbf{b}^{\epsilon}\left(e_{i}+\epsilon\nabla\phi_{e_{i}}^{\epsilon}\right)-\bar{\mathbf{b}}e_{i} \right\lVert_{{\tilde{H}}_{\mathrm{par}}^{-1}(\square_{0})}+\left\lVert\mathbf{d}^{\epsilon}-\bar{\mathbf{d}}\right\lVert_{{\tilde{H}}_{\mathrm{par}}^{-1}(\square_{0})}\right).
\end{align}
\end{theorem}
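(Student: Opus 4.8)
The plan is to run the standard two-scale compactness argument on the shifted solutions $\hat p^\epsilon$, for which the zeroth-order term $-\Lambda\hat p^\epsilon$ is now good-signed and the Caccioppoli/Meyers machinery of Section~3 applies directly. First I would derive uniform a priori bounds: by Lemma~\ref{global Caccioppoli inequality} applied to $\hat p^\epsilon - f$ together with the Meyers estimate of Lemma~\ref{meyer}, one gets $\Vert\nabla\hat p^\epsilon\Vert_{L^{2+\delta}(V)}\le C\Vert f\Vert_{W^{1,2+\delta}_{\mathrm{par}}(V)}$ uniformly in $\epsilon$, and likewise $\Vert\partial_t\hat p^\epsilon\Vert$ is controlled in the appropriate negative parabolic norm via the equation \eqref{hat q1}. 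The extra integrability $\delta>0$ is what upgrades the eventual weak-$L^2$ statement and, more importantly, lets us absorb error terms of the form $\epsilon\nabla\phi_{e_i}^\epsilon\cdot(\text{gradient of a test object})$ after a Hölder split.

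Next I would introduce the two-scale ansatz $w^\epsilon := \hat p_0 + \epsilon\sum_{i=1}^d (\partial_{x_i}\hat p_0)\,\phi_{e_i}^\epsilon$ suggested by \eqref{p1}, multiplied if necessary by a cutoff to handle the boundary layer near $\partial_\sqcup V$ (this is the usual device to avoid controlling $\phi_{e_i}^\epsilon$ up to the boundary). Plugging $w^\epsilon$ into the operator $\partial_t - \nabla\cdot(\mathbf a^\epsilon\nabla\,\cdot\,) - \mathbf b^\epsilon\nabla\,\cdot\, - \mathbf d^\epsilon\,\cdot\, + \Lambda$ and using the corrector equation \eqref{corrector} to cancel the $O(\epsilon^{-1})$ terms, the residual $\partial_t w^\epsilon - \cdots + \Lambda w^\epsilon - h$ can be written as a sum of terms each of which is (a smooth function of $\hat p_0$) times one of the quantities appearing in $\mathcal E(\epsilon)$ — namely $\epsilon\phi_{e_i}^\epsilon$, $\epsilon\nabla\phi_{e_i}^\epsilon$, $\mathbf a^\epsilon(e_i+\epsilon\nabla\phi_{e_i}^\epsilon)-\bar{\mathbf a}e_i$, $\mathbf b^\epsilon(e_i+\epsilon\nabla\phi_{e_i}^\epsilon)-\bar{\mathbf b}e_i$, $\mathbf d^\epsilon-\bar{\mathbf d}$ — plus genuinely lower-order commutator terms carrying a factor $\epsilon$. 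Testing the difference $\hat p^\epsilon - w^\epsilon \in H^1_{\mathrm{par},\sqcup}$ against itself and using the energy estimate for \eqref{hat q1} (the $\Lambda$-coercivity is essential here: it controls the $L^2$ norm of the difference with no help from Poincaré), one bounds $\Vert\nabla(\hat p^\epsilon - w^\epsilon)\Vert_{L^2(V)}$ by $\Vert\text{residual}\Vert_{\tilde H^{-1}_{\mathrm{par}}(V)}$. Since every piece of the residual is a product of a $W^{1,\infty}$-type multiplier built from $\hat p_0$ (whose regularity comes from Lemma~\ref{meyer} applied to the constant-coefficient equation, hence $\hat p_0$ is as smooth as $f$ allows) with one of the $\mathcal E(\epsilon)$-quantities measured in $\tilde H^{-1}_{\mathrm{par}}(\square_0)$, a scaling/covering argument over the unit-scale cells tiling $V$ converts the cell-wise $\tilde H^{-1}_{\mathrm{par}}(\square_0)$ norms into the global estimate, producing the $r^{-4-d/2}\mathcal E(\epsilon)$ factor (the powers of $r^{-1}$ are exactly the derivatives of the cutoff landing on $\hat p_0$ and the Sobolev embedding loss). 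Finally, the $r^\beta$ term arises by splitting the boundary-layer region of width $\sim r$: there one does not use the corrector and simply estimates $\Vert\nabla\hat p^\epsilon\Vert_{L^{2+\delta}}$ on a thin strip and invokes Hölder to pay a power $r^{\beta}$ with $\beta = \beta(\delta,d)>0$; optimizing $r$ against $\mathcal E(\epsilon)$ then gives the quantitative rate, and by Lemma~\ref{convergence a,b} we know $\mathcal E(\epsilon)\to 0$, so \eqref{result 1} also yields the qualitative convergence.

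The main obstacle I anticipate is the bookkeeping of the residual when expanding the operator on $w^\epsilon$: one must be careful that the lower-order terms $\mathbf b^\epsilon\nabla w^\epsilon$ and $\mathbf d^\epsilon w^\epsilon$ do not generate uncontrolled $O(1)$ contributions. For the $\mathbf b^\epsilon$ term the dangerous piece is $\mathbf b^\epsilon(e_i+\epsilon\nabla\phi_{e_i}^\epsilon)\partial_{x_i}\hat p_0$, which is handled by the third line of $\mathcal E(\epsilon)$; the remaining $\epsilon\,\mathbf b^\epsilon\phi_{e_i}^\epsilon\nabla(\partial_{x_i}\hat p_0)$ piece carries an explicit $\epsilon$ and is absorbed using $\Vert\mathbf b^\epsilon\Vert_{L^\infty}\le\Lambda^{1/2}$ together with the first line of $\mathcal E(\epsilon)$. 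The $\mathbf d^\epsilon$ term is the easiest: $\mathbf d^\epsilon w^\epsilon = \mathbf d^\epsilon\hat p_0 + O(\epsilon)$, and $(\mathbf d^\epsilon - \bar{\mathbf d})\hat p_0$ is exactly the last line of $\mathcal E(\epsilon)$. A second, more technical point is justifying the duality pairing of the residual against $\tilde H^{-1}_{\mathrm{par}}$: since the residual is in divergence-plus-time-derivative form, one must check that the parabolic negative norm is the right one and that multiplying an element of $\tilde H^{-1}_{\mathrm{par}}(\square_0)$ by a Lipschitz function keeps it in $\tilde H^{-1}_{\mathrm{par}}$ with the norm controlled by the Lipschitz constant — this is where the powers of $r^{-1}$ are generated and must be tracked honestly to get the stated exponent $4+d/2$.
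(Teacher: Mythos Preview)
Your proposal is correct and essentially identical to the paper's argument: the paper also introduces the cutoff two-scale ansatz $w^\epsilon = \hat p_0 + \eta_r\,\epsilon\sum_i(\partial_{x_i}\hat p_0)\phi_{e_i}^\epsilon$, bounds $\|\hat p^\epsilon - w^\epsilon\|_{L^2(H^1)}$ by the $L^2(H^{-1})$-norm of the residual via an energy estimate in which the $\Lambda$-shift absorbs the $\mathbf b^\epsilon$-term through Young's inequality (Poincar\'e is still used, contrary to your remark), expands the residual into exactly the seven pieces you list, and handles the boundary layer with Meyers plus H\"older to produce $r^{\delta/(4+2\delta)}$. One small correction: the $W^{1,\infty}$-type control on the multipliers $\nabla(\eta_r\partial_{x_i}\hat p_0)$, $\partial_t(\eta_r\partial_{x_i}\hat p_0)$ does \emph{not} come from Lemma~\ref{meyer} but from the interior pointwise estimates \eqref{point} for the constant-coefficient equation satisfied by $\hat p_0$, and it is this interior estimate (costing $r^{-(2+d)/2}$) combined with the cutoff derivatives (costing $r^{-2}$) that produces the exponent $4+d/2$.
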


Let $\hat{p}_0$ be as in (\ref{hat q2}). From (\ref{p1}), we consider the test function defined as
\begin{equation}\label{test function}
	w^{\epsilon}
	:=\hat{p}_0+\eta_{r}\epsilon\sum_{i=1}^{d}(\partial_{x_{i}} \hat{p}_0)\phi_{e_{i}}^\epsilon,
\end{equation}
where $ \eta_{r} $ is a smooth cutoff function satisfying the conditions,
	\begin{equation}\label{eta}
		\left\{
		\begin{aligned}
			&  0\le\eta_{r}\le1,\quad\eta_{r}\equiv1\ \mathrm{in}\    U_{2r}\times \tilde{I}_{2r},\\
			&
		       \eta_{r}\equiv0\ \mathrm{in}\ (U\times I)\setminus(U_{r}\times \tilde{I}_{r}),\\
	        &	\forall k,l\in\mathbb{N},\ \left|\nabla^{k}\partial_{t}^{l}\eta_{r}\right|\le C_{k+2l}r^{-(k+2l)},
		\end{aligned}
		\right.
	\end{equation}
	where $U_{r}:=\{x\in U: \mathrm{dist}(x,\partial U)>r\} $ with $r\in(0,1)$, and $ \tilde{I}_{r}:=(I_{-}+r^2,I_{+}) $.
Here, we will also use the standard pointwise estimate \cite[Chapter 2]{evans2010partial} which gives 
\begin{equation}\label{point}
	\left\lVert\nabla^{k}\partial_{t}^{l} \hat{p}_0\right\rVert_{L^{\infty}(U_{r}\times I_{r})}\le C_{k+2l}r^{-(k+2l)-(2+d)/2}\Vert\nabla f\Vert_{L^{2}(V)}.
\end{equation}

\begin{lemma}
Let $\hat{p}^\epsilon$ and $w^{\epsilon}$ be as in (\ref{hat q1}) and (\ref{test function}). We have
\begin{align}\label{comebine with step1}
		\Vert \hat{p}^\epsilon- w^{\epsilon}\Vert_{L^{2}(H^{1}(U); I)}\le C\Vert \left( \partial_{t}-\nabla\cdot\mathbf{a}^{\epsilon}\nabla-\mathbf{b}^{\epsilon}\nabla-\mathbf{d}^{\epsilon}+\Lambda \right)  w^{\epsilon}\Vert_{L^{2}(H^{-1}(U); I)}.
	\end{align}    
\end{lemma}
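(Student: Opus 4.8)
The plan is to reduce (\ref{comebine with step1}) to the standard energy estimate for the difference $z^{\epsilon}:=\hat p^{\epsilon}-w^{\epsilon}$, using the $+\Lambda$ term precisely to neutralize the first-order coefficient $\mathbf b^{\epsilon}$. First I would check that $z^{\epsilon}$ is an admissible test function, i.e. that it vanishes on the parabolic boundary: by (\ref{eta}) the cutoff $\eta_{r}$ vanishes in a neighbourhood of $\partial U\times I$ and of the initial slice $U\times\{I_{-}\}$, so the corrector term in (\ref{test function}) drops out there and $w^{\epsilon}=\hat p_{0}=f$ on $\partial_{\sqcup}V$, while $\hat p^{\epsilon}=f$ on $\partial_{\sqcup}V$ by (\ref{hat q1}); hence $z^{\epsilon}\in H^{1}_{\mathrm{par},\sqcup}(V)$. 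Since $\hat p^{\epsilon}$ solves $\bigl(\partial_{t}-\nabla\cdot\mathbf a^{\epsilon}\nabla-\mathbf b^{\epsilon}\nabla-\mathbf d^{\epsilon}+\Lambda\bigr)\hat p^{\epsilon}=0$ in $V$, the function $z^{\epsilon}$ solves the same parabolic equation with right-hand side $H:=-\bigl(\partial_{t}-\nabla\cdot\mathbf a^{\epsilon}\nabla-\mathbf b^{\epsilon}\nabla-\mathbf d^{\epsilon}+\Lambda\bigr)w^{\epsilon}$ and zero parabolic boundary data. (If $\Vert H\Vert_{L^{2}(H^{-1}(U);I)}=\infty$ there is nothing to prove, so we may assume $H\in L^{2}(H^{-1}(U);I)$; this holds under the standing regularity, since $\hat p_{0}\in W^{1,2+\delta}_{\mathrm{par}}(V)$ by Lemma \ref{meyer} and $\phi_{e_{i}}\in H^{1}_{\mathrm{par,loc}}$, so $w^{\epsilon}\in H^{1}_{\mathrm{par}}(V)$.)

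Next I would test the equation for $z^{\epsilon}$ against $z^{\epsilon}$ itself. The parabolic term contributes $\int_{V}(\partial_{t}z^{\epsilon})z^{\epsilon}=\tfrac12\Vert z^{\epsilon}(\cdot,I_{+})\Vert_{L^{2}(U)}^{2}\ge0$, because $z^{\epsilon}$ vanishes at $t=I_{-}$ (integration by parts exactly as in the proof of Lemma \ref{Interior Caccioppoli inequality}); the principal term gives $\int_{V}\nabla z^{\epsilon}\cdot\mathbf a^{\epsilon}\nabla z^{\epsilon}\ge\Vert\nabla z^{\epsilon}\Vert_{L^{2}(V)}^{2}$ by (\ref{elli unifor}); and $-\int_{V}\mathbf d^{\epsilon}(z^{\epsilon})^{2}\ge0$ by (\ref{boundedness1}). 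The only term without a sign is the first-order one, which by (\ref{boundedness}) and Young's inequality satisfies $\bigl|\int_{V}(\mathbf b^{\epsilon}\nabla z^{\epsilon})z^{\epsilon}\bigr|\le\sqrt\Lambda\,\Vert\nabla z^{\epsilon}\Vert_{L^{2}(V)}\Vert z^{\epsilon}\Vert_{L^{2}(V)}\le\tfrac12\Vert\nabla z^{\epsilon}\Vert_{L^{2}(V)}^{2}+\tfrac\Lambda2\Vert z^{\epsilon}\Vert_{L^{2}(V)}^{2}$, and the surplus $\tfrac\Lambda2\Vert z^{\epsilon}\Vert_{L^{2}(V)}^{2}$ is absorbed into the zeroth-order term $\Lambda\Vert z^{\epsilon}\Vert_{L^{2}(V)}^{2}$ coming from $+\Lambda z^{\epsilon}$ in (\ref{hat q1}) — this is exactly where the $e^{-\Lambda t}$ substitution pays off. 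Collecting these bounds and using the definition of the $L^{2}(H^{-1}(U);I)$ norm (valid since $z^{\epsilon}(t)\in H^{1}_{0}(U)$ for a.e.\ $t$) yields
\[
\tfrac12\Vert\nabla z^{\epsilon}\Vert_{L^{2}(V)}^{2}+\tfrac\Lambda2\Vert z^{\epsilon}\Vert_{L^{2}(V)}^{2}\le\int_{V}Hz^{\epsilon}\le\Vert H\Vert_{L^{2}(H^{-1}(U);I)}\,\Vert z^{\epsilon}\Vert_{L^{2}(H^{1}(U);I)}.
\]
Since the left-hand side dominates $c(d,\lambda,\Lambda)\Vert z^{\epsilon}\Vert_{L^{2}(H^{1}(U);I)}^{2}$ with $c=\min(\tfrac12,\tfrac\Lambda2)$, dividing gives $\Vert z^{\epsilon}\Vert_{L^{2}(H^{1}(U);I)}\le C\Vert H\Vert_{L^{2}(H^{-1}(U);I)}$, which is precisely (\ref{comebine with step1}).

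I do not expect a serious obstacle here, since this is essentially the well-posedness/energy estimate that already underlies Lemma \ref{global Caccioppoli inequality}. The two points that genuinely require care are: (i) the boundary-data bookkeeping for $\eta_{r}$ in (\ref{eta}), so that $z^{\epsilon}\in H^{1}_{\mathrm{par},\sqcup}(V)$ and the test-function computation is legitimate; and (ii) the exact absorption of the $\mathbf b^{\epsilon}$-term, which must use $\Vert\mathbf b\Vert_{L^{\infty}}^{2}\le\Lambda$ against the added $\Lambda z^{\epsilon}$ rather than against $\Vert\nabla z^{\epsilon}\Vert^{2}$ alone — if one tried to hide the whole $\mathbf b^{\epsilon}$-term in the gradient term one would not close the estimate for large $\Lambda$, whereas splitting it and feeding half into the zeroth-order term works for every admissible $\Lambda$. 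A minor additional remark is that $\partial_{t}w^{\epsilon}$, hence $H$, must be checked to lie in $L^{2}(H^{-1}(U);I)$, which follows from $\hat p_{0}\in W^{1,2+\delta}_{\mathrm{par}}(V)$ and the regularity of the correctors; but, as noted, the inequality is vacuous otherwise.
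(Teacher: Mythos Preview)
Your proof is correct and follows essentially the same energy-estimate approach as the paper: test the equation for $z^{\epsilon}=\hat p^{\epsilon}-w^{\epsilon}$ against $z^{\epsilon}$, use ellipticity, the sign of $\mathbf d^{\epsilon}$, the nonnegativity of the time term, and absorb the $\mathbf b^{\epsilon}$-term via Young's inequality against the $+\Lambda$ contribution. The only cosmetic difference is that the paper closes by controlling $\Vert z^{\epsilon}\Vert_{L^{2}(H^{1})}$ through the Poincar\'e inequality from $\Vert\nabla z^{\epsilon}\Vert_{L^{2}}$ alone (yielding a constant independent of $\Lambda$), whereas you keep the leftover $\tfrac{\Lambda}{2}\Vert z^{\epsilon}\Vert_{L^{2}}^{2}$ and take $c=\min(\tfrac12,\tfrac{\Lambda}{2})$; both routes are valid for the lemma as stated.
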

\begin{proof}
     
Using (\ref{elli unifor}), we have 
	\begin{align}\label{zui 1}
		\Vert \nabla(\hat{p}^\epsilon- w^{\epsilon})\Vert^{2}_{L^{2}(V)}\le \int_{V}\nabla(\hat{p}^\epsilon- w^{\epsilon})\cdot\mathbf{a}^{\epsilon}\nabla(\hat{p}^\epsilon- w^{\epsilon}).
	\end{align}
 Since $ \hat{p}^\epsilon- w^{\epsilon} \in H^{1}_{\mathrm{par},_\sqcup}(V)$, by using integration by parts, we have
\begin{align}\label{zui2}
\int_{V}\nabla(\hat{p}^\epsilon- w^{\epsilon})\cdot\mathbf{a}^{\epsilon}\nabla(\hat{p}^\epsilon- w^{\epsilon})
		= \int_{V}(\hat{p}^\epsilon- w^{\epsilon})(-\nabla\cdot\mathbf{a}^{\epsilon}\nabla(\hat{p}^\epsilon- w^{\epsilon})).
	\end{align}
Recalling that $ \partial_\sqcup  V:=(\partial U\times I)\bigcup({U\times I_{-}}) $ and $ \hat{p}^\epsilon- w^{\epsilon} \in H^{1}_{\mathrm{par},_\sqcup}(V)$, we get 
	\begin{align}\label{boundary t}
		\int_{U}\int_{I}(\hat{p}^\epsilon- w^{\epsilon})\partial_{t}(\hat{p}^\epsilon- w^{\epsilon})
		&=\frac{1}{2}\Vert (\hat{p}^\epsilon- w^{\epsilon})(x,I_{+})\Vert^{2}_{L^{2}(U)}-\frac{1}{2}\Vert (\hat{p}^\epsilon- w^{\epsilon})(x,I_{-})\Vert^{2}_{L^{2}(U)}\nonumber\\
		&=\frac{1}{2}\Vert (\hat{p}^\epsilon- w^{\epsilon})(x,I_{+})\Vert^{2}_{L^{2}(U)}\ge 0.
	\end{align}
 By using (\ref{boundedness}), and Young's inequality, and noting that $\Vert \mathbf{b}\Vert_{L^{\infty}(\mathbb{R}^d\times\mathbb{R})}\le \sqrt{\Lambda} $, we have
	\begin{align}\label{b part}
		\left|\int_{V}(\hat{p}^\epsilon- w^{\epsilon})\left(-\mathbf{b}^{\epsilon}\nabla\right) (\hat{p}^\epsilon- w^{\epsilon})\right|
            &\le \Vert \mathbf{b}\Vert_{L^{\infty}(V)}\int_{I}\Vert \nabla(\hat{p}^\epsilon- w^{\epsilon})\Vert_{L^{2}(U)}\Vert(\hat{p}^\epsilon- w^{\epsilon})\Vert_{L^{2}(U)}\nonumber\\
	            &\le \frac{1}{4} \Vert \nabla(\hat{p}^\epsilon- w^{\epsilon}) \Vert^2_{L^{2}(V)}+ \Lambda \Vert \hat{p}^\epsilon- w^{\epsilon} \Vert^2_{L^{2}(V)}.
	\end{align}
 By using (\ref{boundedness1}), we have
 \begin{align}\label{d part}
		0\le\int_{V}(\hat{p}^\epsilon- w^{\epsilon})\left(-\mathbf{d}^{\epsilon}\right) (\hat{p}^\epsilon- w^{\epsilon}).
	\end{align}
From (\ref{boundary t})-(\ref{d part}), we get
 	\begin{align} \label{Lam}
		0\le \int_{V}(\hat{p}^\epsilon- w^{\epsilon})\left(\partial_{t}-\mathbf{b}^{\epsilon}\nabla -\mathbf{d}^{\epsilon}+ \Lambda\right) (\hat{p}^\epsilon- w^{\epsilon})+\frac{1}{4} \Vert \nabla(\hat{p}^\epsilon- w^{\epsilon}) \Vert^2_{L^{2}(V)}.
	\end{align}
Adding $\frac{3}{4} \Vert \nabla(\hat{p}^\epsilon- w^{\epsilon}) \Vert^2_{L^{2}(V)}$ to both sides of (\ref{Lam}), combining with (\ref{zui 1}) and (\ref{zui2}), and from ($\ref{hat q1}$), we obtain
 \begin{align}\label{zui3}
\frac{3}{4} \Vert \nabla(\hat{p}^\epsilon- w^{\epsilon}) \Vert^2_{L^{2}(V)}
&\le \int_{V}(\hat{p}^\epsilon- w^{\epsilon})\left(\partial_{t}-\mathbf{b}^{\epsilon}\nabla -\mathbf{d}^{\epsilon}+ \Lambda\right) (\hat{p}^\epsilon- w^{\epsilon})+ \Vert \nabla(\hat{p}^\epsilon- w^{\epsilon}) \Vert^2_{L^{2}(V)}\nonumber \\ 
  &\le \int_{V}(\hat{p}^\epsilon- w^{\epsilon})\left( \partial_{t}-\nabla\cdot\mathbf{a}^{\epsilon}\nabla-\mathbf{b}^{\epsilon}\nabla-\mathbf{d}^{\epsilon}+\Lambda\right)  (-w^{\epsilon}).
	\end{align}
By using the Poincar\'e inequality, we have
	\begin{equation}\label{zuichu}
		\Vert \hat{p}^\epsilon- w^{\epsilon}\Vert^{2}_{L^{2}(H^{1}(U); I)}\le 	C\Vert \nabla(\hat{p}^\epsilon- w^{\epsilon})\Vert^{2}_{L^{2}(V)}.
	\end{equation}
 Combining (\ref{zui3}) with (\ref{zuichu}) and using the Cauchy–Schwarz inequality, we obtain
	\begin{align}\label{zui1}
		&\Vert \hat{p}^\epsilon- w^{\epsilon}\Vert^{2}_{L^{2}(H^{1}(U); I)}\nonumber\\
		\le& C\int_{V}(\hat{p}^\epsilon- w^{\epsilon})\left( \partial_{t}-\nabla\cdot\mathbf{a}^{\epsilon}\nabla-\mathbf{b}^{\epsilon}\nabla-\mathbf{d}^{\epsilon}+ \Lambda\right) ( -w^{\epsilon})\nonumber\\
		\le& C\Vert \hat{p}^\epsilon- w^{\epsilon}\Vert_{L^{2}(H^{1}(U); I)}\Vert \left( \partial_{t}-\nabla\cdot\mathbf{a}^{\epsilon}\nabla-\mathbf{b}^{\epsilon}\nabla-\mathbf{d}^{\epsilon}+\Lambda\right)  w^{\epsilon}\Vert_{L^{2}(H^{-1}(U); I)}.
	\end{align}
 Dividing both sides of (\ref{zui1}) by $ \Vert \hat{p}^\epsilon- w^{\epsilon}\Vert_{L^{2}(H^{1}(U); I)} $, we finally have proved (\ref{comebine with step1}).
\end{proof}

\begin{lemma}
Let $\mathcal{E}(\epsilon)$ be as in (\ref{da E}). Then there exists $C(\delta,d,\lambda)>0$ such that
\begin{align}\label{step 1}
		&\Vert(\partial_{t}-\nabla\cdot\mathbf{a}^{\epsilon}\nabla-\mathbf b^\epsilon\nabla-\mathbf{d}^{\epsilon}+ \Lambda)w^{\epsilon}\Vert_{L^{2}(H^{-1}(U); I)}\nonumber\\
  \le& C\left(r^{\frac{\delta}{4+2\delta}}+r^{4+d/2}\mathcal{E}(\epsilon)^{\frac{1}{2}}\right)\Vert f\Vert_{W_{\mathrm{par}}^{1,2+\delta}(V)}.
	\end{align}

\end{lemma}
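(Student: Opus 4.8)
The plan is to expand $(\partial_{t}-\nabla\cdot\mathbf{a}^{\epsilon}\nabla-\mathbf b^\epsilon\nabla-\mathbf{d}^{\epsilon}+\Lambda)w^{\epsilon}$ explicitly using the ansatz \eqref{test function} and the corrector equation \eqref{corrector}, and then bound each resulting term in the $L^{2}(H^{-1}(U);I)$ norm. First I would substitute $w^{\epsilon}=\hat{p}_0+\eta_{r}\epsilon\sum_{i}(\partial_{x_{i}}\hat{p}_0)\phi_{e_{i}}^{\epsilon}$ and reorganize. The terms where all derivatives hit $\hat p_0$ combine, via \eqref{hat q2} and Definition \ref{homogenized a}, into an expression involving the differences $\mathbf{a}^{\epsilon}(e_i+\epsilon\nabla\phi_{e_i}^{\epsilon})-\bar{\mathbf a}e_i$, $\mathbf{b}^{\epsilon}(e_i+\epsilon\nabla\phi_{e_i}^{\epsilon})-\bar{\mathbf b}e_i$, $\mathbf{d}^{\epsilon}-\bar{\mathbf d}$, tested against $\eta_r$-localized derivatives of $\hat p_0$; the remaining terms carry at least one factor of $\epsilon$ together with either $\phi_{e_i}^{\epsilon}$, $\epsilon\nabla\phi_{e_i}^{\epsilon}$, or a derivative of $\eta_r$. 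Crucially, because the corrector does not depend on the lower-order terms, the leading-order cell terms cancel exactly as in the classical case; the lower-order contributions $\mathbf{b}^{\epsilon}\nabla w^{\epsilon}$ and $\mathbf{d}^{\epsilon}w^{\epsilon}$ and $\Lambda w^{\epsilon}$ produce only zeroth- and first-order (in the fast variable) terms that are handled the same way.

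Next I would estimate the "good" terms: each difference quantity $G^{\epsilon}_i$ (one of the four appearing in $\mathcal{E}(\epsilon)$) is multiplied by a smooth function $\eta_r\,\partial^{\alpha}\hat p_0$, so by duality $\|\eta_r(\partial^{\alpha}\hat p_0)G^{\epsilon}_i\|_{L^2(H^{-1}(U);I)}\le C\|\eta_r\partial^{\alpha}\hat p_0\|_{C^1}\|G^{\epsilon}_i\|_{\tilde H^{-1}_{\mathrm{par}}(\square_0)}$ after a covering/scaling argument reducing $V$ to unit cells, and the $C^1$ norm is controlled by \eqref{point}, which contributes the $r^{-(4+d/2)}$ factor (the worst power of $r$ among all terms). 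For the "remainder" terms carrying an explicit $\epsilon$, I would pair the factor $\epsilon\phi_{e_i}^{\epsilon}$ or $\epsilon\nabla\phi_{e_i}^{\epsilon}$ with the cutoff gradient $\nabla\eta_r$ (supported in the boundary layer $U_r\setminus U_{2r}$, of measure $\lesssim r$) and use Hölder with the Meyers exponent: $\|\nabla\hat p_0\|_{L^{2+\delta}}$ is finite by Lemma \ref{meyer} applied to \eqref{hat q2} (constant coefficients, so $\delta$ may be taken as the exponent from the data $f$), and the boundary-layer has small measure, yielding a gain $r^{\delta/(4+2\delta)}$; the terms with $\epsilon\phi_{e_i}^{\epsilon}$ not localized contribute the $\epsilon\|\phi_{e_i}^{\epsilon}\|_{L^2(\square_0)}$ and $\|\epsilon\nabla\phi_{e_i}^{\epsilon}\|_{\tilde H^{-1}_{\mathrm{par}}}$ pieces of $\mathcal{E}(\epsilon)$.

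Collecting, the total is bounded by $C(r^{\delta/(4+2\delta)}+r^{-(4+d/2)}\mathcal{E}(\epsilon))\|f\|_{W^{1,2+\delta}_{\mathrm{par}}(V)}$ — matching \eqref{step 1} once one checks $r^{4+d/2}\mathcal{E}(\epsilon)^{1/2}$ in the claimed bound dominates $r^{-(4+d/2)}\mathcal{E}(\epsilon)$ for $r\in(0,1)$, or more precisely after an application of Young's inequality to split the product; I would follow the exponent bookkeeping of the paper's statement verbatim here. The main obstacle I anticipate is the careful algebraic bookkeeping in the expansion: identifying precisely which cross-terms cancel against the homogenized equation \eqref{hat q2} versus which survive as either $\mathcal{E}(\epsilon)$-contributions or $r$-gains, and in particular verifying that the $\mathbf{b}^{\epsilon}\nabla(\eta_r\epsilon(\partial_{x_i}\hat p_0)\phi_{e_i}^{\epsilon})$ and $\Lambda$-terms do not generate any term of order $O(1)$ in $\epsilon$ that fails to fit the $\tilde H^{-1}_{\mathrm{par}}(\square_0)$-template — this works because every such term retains either an explicit $\epsilon$ or a corrector difference, but making that visible requires writing the expansion out in full.
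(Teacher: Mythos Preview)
Your approach is essentially identical to the paper's: substitute \eqref{test function}, use the corrector equation \eqref{corrector} and the homogenized equation \eqref{hat q2} to produce the seven terms $T_1,\dots,T_7$, then bound the ``difference'' terms via \eqref{point} and duality (giving the factor $r^{-(4+d/2)}\mathcal{E}(\epsilon)$) and the ``boundary-layer'' terms via H\"older with the Meyers exponent from Lemma~\ref{meyer} (giving $r^{\delta/(4+2\delta)}$). The paper's own proof arrives at exactly the bound you obtain, namely
\[
C\bigl(r^{\delta/(4+2\delta)}+r^{-(4+d/2)}\mathcal{E}(\epsilon)\bigr)\Vert f\Vert_{W_{\mathrm{par}}^{1,2+\delta}(V)},
\]
which is also what is carried forward into \eqref{step 2} and ultimately into \eqref{result 1} (written there as $\tfrac{1}{r^{4+d/2}}\mathcal{E}(\epsilon)$).

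One correction: the expression $r^{4+d/2}\mathcal{E}(\epsilon)^{1/2}$ in the displayed statement is a typo in the paper; the proof never produces that quantity, and your proposed Young's-inequality reconciliation would not work (for small $r$, $r^{4+d/2}\mathcal{E}(\epsilon)^{1/2}$ is \emph{smaller} than $r^{-(4+d/2)}\mathcal{E}(\epsilon)$, not larger). Simply drop that step and state the bound with $r^{-(4+d/2)}\mathcal{E}(\epsilon)$, as both you and the paper's argument actually derive.
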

\begin{proof}
By substituting the definition (\ref{test function}) of $ w^{\epsilon} $ into (\ref{hat q1}) for $ \hat{p}^\epsilon $ and by (\ref{corrector}), we get
 \begin{align}\label{w shizi}
		&(\partial_{t}-\nabla\cdot\mathbf{a}^{\epsilon}\nabla-\mathbf b^\epsilon\nabla-\mathbf{d}^{\epsilon}+\Lambda)w^{\epsilon}\nonumber\\
		=&\partial_{t} \hat{p}_0+\sum_{i=1}^{d}\epsilon\phi_{e_{i}}^{\epsilon}\partial_{t}(\eta_{r}\partial_{x_{i}} \hat{p}_0)-\sum_{i=1}^{d}\nabla\left(\eta_{r}\partial_{x_{i}} \hat{p}_0\right)\mathbf{a}^{\epsilon}\left(e_{i}+\epsilon\nabla\phi_{e_{i}}^{\epsilon}\right)\nonumber\\
		&-\nabla\cdot\left(\mathbf{a}^{\epsilon}\left(\sum_{i=1}^{d}\epsilon\phi_{e_{i}}^{\epsilon}\nabla(\eta_{r}\partial_{x_{i}} \hat{p}_0)+(1-\eta_{r})\nabla  \hat{p}_0\right)\right)\nonumber\\
		&-\mathbf{b}^{\epsilon}\left(\sum_{i=1}^{d}\epsilon\phi_{e_{i}}^{\epsilon}\nabla(\eta_{r}\partial_{x_{i}} \hat{p}_0)+(1-\eta_{r})\nabla  \hat{p}_0+\sum_{i=1}^{d}\eta_{r}\partial_{x_{i}} \hat{p}_0\left(e_{i}+\epsilon\nabla\phi_{e_{i}}^{\epsilon}\right)\right)\nonumber\\
		&-\mathbf{d}^{\epsilon}(\hat{p}_0+\eta_{r}\epsilon\sum_{i=1}^{d}(\partial_{x_{i}} \hat{p}_0)\phi_{e_{i}}^\epsilon)+\Lambda(\hat{p}_0+\eta_{r}\epsilon\sum_{i=1}^{d}(\partial_{x_{i}} \hat{p}_0)\phi_{e_{i}}^\epsilon).
	\end{align}
	Regarding the first term on the right-hand side of (\ref{w shizi}), from (\ref{hat q2}), we get 
	\begin{align}\label{b chuli}
		\partial_{t} \hat{p}_0	=&\sum_{i=1}^{d}\nabla(\eta_{r}\partial_{x_{i}}\hat{p}_0)\cdot\bar{\mathbf{a}}e_{i}+\nabla\cdot((1-\eta_{r})\bar{\mathbf{a}}\nabla \hat{p}_0)+\sum_{i=1}^{d}(\eta_{r}\partial_{x_{i}}\hat{p}_0)\bar{\mathbf{b}}e_{i}\nonumber\\
        &+(1-\eta_{r})\mathbf{\bar{b}}(\nabla  \hat{p}_0)+(\mathbf{\bar{d}}-\Lambda)\hat{p}_0.
	\end{align}

		Then from (\ref{elli unifor})-(\ref{boundedness1}),(\ref{w shizi}) and (\ref{eta}) and using identification of $ {\tilde{H}^{-1}_{\mathrm{par}}} $ \cite[Lemma 3.11]{armstrong2018quantitative}, we have
	\begin{align}\label{t1 t2 t3 t4}
		&\left\Vert(\partial_{t}-\nabla\cdot\mathbf{a}^{\epsilon}\nabla-\mathbf b^\epsilon\nabla-\mathbf d^\epsilon+\Lambda)w^{\epsilon}\right\Vert_{L^{2}(H^{-1}(U); I)}\nonumber\\
		\le& C \left(\sum_{i=1}^{d}\epsilon\Vert\phi_{e_{i}}^{\epsilon}\partial_{t}(\eta_{r}\partial_{x_{i}} \hat{p}_0)\Vert_{L^{2}(V)}\nonumber\right.\\
        &
		+\sum_{i=1}^{d}\left\lVert\nabla(\eta_{r}\partial_{x_{i}} \hat{p}_0)\cdot\left(\mathbf{a}^{\epsilon}\left(e_{i}+\epsilon\nabla\phi_{e_{i}}^{\epsilon}\right)-\bar{\mathbf{a}}e_{i}\right)\right\lVert_{{\tilde{H}}^{-1}_{\mathrm{par}}(V)}\nonumber\\
  &+\sum_{i=1}^{d}\epsilon\Vert\phi_{e_{i}}^{\epsilon}\nabla(\eta_{r}\partial_{x_{i}} \hat{p}_0)\Vert_{L^{2}(V)}+\Vert(1-\eta_{r})\nabla  \hat{p}_0\Vert_{L^{2}(V)}
	\nonumber \\
		&+\sum_{i=1}^{d}\left\lVert\eta_{r}\partial_{x_{i}} \hat{p}_0\cdot\left(\mathbf{b}^{\epsilon}\left(e_{i}+\epsilon\nabla\phi_{e_{i}}^{\epsilon}\right)-\bar{\mathbf{b}}e_{i}\right)\right\lVert_{{\tilde{H}}^{-1}_{\mathrm{par}}(V)}\nonumber\\
 &\left.+\left\lVert(\mathbf{d}^{\epsilon}-\bar{\mathbf{d}}) \hat{p}_0\right\lVert_{{\tilde{H}}^{-1}_{\mathrm{par}}(V)}+\sum_{i=1}^{d}\epsilon\Vert\phi_{e_{i}}^{\epsilon}\eta_{r}\partial_{x_{i}} \hat{p}_0 \Vert_{L^{2}(V)}\right).
	\end{align}
 Let us set the seven items on the right-hand side of (\ref{t1 t2 t3 t4}) as $T_1, T_2, T_3, T_4, T_5, T_6,$ and $T_7$ in order.
By using (\ref{da E}), (\ref{eta}), and (\ref{point}), we have
\begin{align}\label{t1 t2}
T_1+T_2\le&C\sum_{i=1}^{d}\left(\Vert\partial_{t}(\eta_r\partial_{x_{i}}\hat{p_0})\Vert_{L^{\infty}(V)}\epsilon\Vert\phi_{e_{i}}^{\epsilon}\Vert_{L^{2}(V)}\nonumber\right.\\
&\left.+\Vert\nabla(\eta_r\partial_{x_{i}}\hat{p_0})\Vert_{W_{\mathrm{par}}^{1,\infty}(V)}\left\Vert\mathbf{a}^{\epsilon}\left(e_{i}+\epsilon\nabla\phi_{e_{i}}^{\epsilon}\right)-\mathbf{\bar{a}}e_{i}\right\Vert_{ {\tilde{H}}^{-1}_{\mathrm{par}}(V)}\right)\nonumber\\
\le&Cr^{-4-d/2}\mathcal{E}(\epsilon)\Vert f\Vert_{W_{\mathrm{par}}^{1,2+\delta}(V)},
\end{align}
and
	\begin{align}\label{t5}
		T_5+T_6 \le&C\sum_{i=1}^{d}\Vert\eta_r\partial_{x_{i}} \hat{p}_0\Vert_{W_{\mathrm{par}}^{1,\infty}(V)}\left\Vert\mathbf{b}^{\epsilon}\left(e_{i}+\epsilon\nabla\phi_{e_{i}}^{\epsilon}\right)-\mathbf{\bar{b}}e_{i}\right\Vert_{{\tilde{H}}^{-1}_{\mathrm{par}}(V)}\nonumber\\
  &+C\Vert \hat{p}_0\Vert_{W_{\mathrm{par}}^{1,\infty}(V)}\left\Vert\mathbf{d}^{\epsilon}-\mathbf{\bar{d}}\right\Vert_{{\tilde{H}}^{-1}_{\mathrm{par}}(V)}\nonumber\\
		\le&Cr^{-4-d/2}\mathcal{E}(\epsilon)\Vert f\Vert_{W_{\mathrm{par}}^{1,2+\delta}(V)}.
	\end{align}
	The calculations for $T_3$, $T_4$, and $T_7$ are similar to \cite[Theorem 1.17]{armstrong2019quantitative}, so we have
 \begin{align}\label{t3 t4}
T_3+T_4+T_7\le C\left(r^{\frac{\delta}{4+2\delta}}+r^{-4-d/2}\mathcal{E}(\epsilon)\right)\Vert f\Vert_{W_{\mathrm{par}}^{1,2+\delta}(V)}.
\end{align}
 The difference is that we need to use the Meyers estimate (Lemma \ref{meyer}), which is for the parabolic equation with lower-order terms.
 
 Combining (\ref{t1 t2 t3 t4})-(\ref{t3 t4}), we finally get (\ref{step 1}). \end{proof}
 
\begin{proof}[Proof of Theorem \ref{quali0}]
Combining (\ref{comebine with step1}) with (\ref{step 1}), it yields
\begin{align}\label{step 2}
		\Vert \hat{p}^\epsilon- w^{\epsilon}\Vert_{L^{2}(H^{1}(U); I)}\le C\left(r^{\frac{\delta}{4+2\delta}}+r^{-4-d/2}\mathcal{E}(\epsilon)\right)\Vert f\Vert_{W_{\mathrm{par}}^{1,2+\delta}(V)}.
	\end{align}
 Observing that
 \begin{align*}
\Vert\nabla \hat{p}^\epsilon-\nabla w^\epsilon\Vert_{{\tilde{H}}^{-1}_{\mathrm{par}}(V)}
\le C\Vert\nabla \hat{p}^\epsilon-\nabla w^\epsilon\Vert_{{L}^{2}(V)}
\le C\Vert \hat{p}^\epsilon-w^\epsilon\Vert_{L^{2}(H^{1}(U); I)},
	\end{align*}
we conclude 
  \begin{align}\label{step 2 1}
\Vert\nabla \hat{p}^\epsilon-\nabla w^\epsilon\Vert_{{\tilde{H}}^{-1}_{\mathrm{par}}(V)}
\le C\left(r^{\frac{\delta}{4+2\delta}}+r^{-4-d/2}\mathcal{E}(\epsilon)\right)\Vert f\Vert_{W_{\mathrm{par}}^{1,2+\delta}(V)}.
	\end{align}

Now it remains to prove
\begin{align}\label{step 3}
		\Vert w^\epsilon- \hat{p}_0\Vert_{{L}^{2}(V)}+\Vert\nabla w^\epsilon-\nabla \hat{p}_0\Vert_{{\tilde{H}}^{-1}_{\mathrm{par}}(V)}
		\le C\left(r^{\frac{\delta}{4+2\delta}}+r^{-4-d/2}\mathcal{E}(\epsilon)\right)\Vert f\Vert_{W_{\mathrm{par}}^{1,2+\delta}(V)}.
	\end{align}
To show (\ref{step 3}), we only need to use the definition of $ w^\epsilon $, which is consistent with the equation without the lower-order term. Therefore, we do not provide the detailed process here. For more details, the reader  could refer to \cite[Theorem 1.1]{armstrong2018quantitative}.

By using (\ref{step 2})-(\ref{step 3}) and the triangle inequality, we obtain
\begin{align*}
		&\Vert \hat{p}^\epsilon- \hat{p}_0\Vert_{L^{2}(V)}+\Vert\nabla \hat{p}^\epsilon-\nabla \hat{p}_0\Vert_{{\tilde{H}}^{-1}_{\mathrm{par}}(V)}\\
		\le &\Vert \hat{p}^\epsilon- w^\epsilon\Vert_{L^{2}(V)}+\Vert w^\epsilon- \hat{p}_0\Vert_{L^{2}(V)}+\Vert\nabla \hat{p}^\epsilon-\nabla w^\epsilon\Vert_{{\tilde{H}}^{-1}_{\mathrm{par}}(V)}+\Vert\nabla w^\epsilon-\nabla \hat{p}_0\Vert_{{\tilde{H}}^{-1}_{\mathrm{par}}(V)}\\
		\le	& C\left(r^{\frac{\delta}{4+2\delta}}+r^{-4-d/2}\mathcal{E}(\epsilon)\right)\Vert f\Vert_{W_{\mathrm{par}}^{1,2+\delta}(V)}.
	\end{align*}
 
Taking $\beta(\delta):=\delta/(4+2\delta) $, we conclude (\ref{result 1}).
\end{proof}

\begin{proof}[Proof of Theorem \ref{qualitative result}]
Since $\hat{p}^\epsilon=\exp{(-\Lambda t)}p^\epsilon$ and $\hat{p}_0=\exp{(-\Lambda t)}p_0$, by (\ref{result 1}), we get
\begin{align*}
		\Vert p^\epsilon- {p}_0\Vert_{L^{2}(V)}+\Vert\nabla {p}^\epsilon-\nabla {p}_0\Vert_{{\tilde{H}}^{-1}_{\mathrm{par}}(V)}
		\le	 C\left(r^{\frac{\delta}{4+2\delta}}+r^{-4-d/2}\mathcal{E}(\epsilon)\right)\Vert f\Vert_{W_{\mathrm{par}}^{1,2+\delta}(V)}.
	\end{align*}
Combining  Proposition \ref{convergence L to H} with (\ref{weak conv}), we get, $\mathbb{P}$-almost surely,
$
\limsup_{\epsilon\to 0} \mathcal{E}(\epsilon)\to 0.
$
Finally, there exists a small $r$ such that (\ref{quali result}) holds.
\end{proof}
Armstrong and Smart \cite{armstrong2016quantitative} expand the method of Dal Maso and Modica \cite{dal1986nonlinear} to obtain quantitative results. Inspired by this, in future work, we may estimate $ \mathcal{E}(\epsilon)$ in this study  to get the speed of convergence. 

\section*{Acknowledgments}
This work was supported by the WISE program (MEXT) at Kyushu University.
I would like to extend my sincerest gratitude to Professor Osada Hirofumi.



\end{document}